\DeclareMathOperator{\tr}{tr}
\DeclareMathOperator{\re}{Re}
\DeclareMathOperator{\loc}{loc}
\DeclareMathOperator{\Ric}{Ric}
\DeclareMathOperator{\Rm}{Rm}
\newcommand{\p}{\frac{\partial}{\partial t}}
\newcommand{\sdd}{\sqrt{-1}\partial\bar{\partial}}
\newcommand{\om}{\omega}
\newcommand{\ra}{\rightarrow}
\newcommand{\vp}{\varphi}
\newcommand{\pa}{\partial}
\newcommand{\ep}{\epsilon}
\newcommand{\Om}{\Omega}
\newcommand{\ptd}{(\frac{\partial}{\partial t}-\Delta)}
\newcommand{\na}{\nabla}
\newcommand{\al}{\alpha}
\newcommand{\De}{\Delta}
\newcommand{\tl}{\tilde}
\newcommand{\la}{\lambda}
\newcommand{\lt}{\left}
\newcommand{\rt}{\right}
\newtheorem{thm}{Theorem}[section]
\newtheorem{lem}{Lemma}[section]
\newtheorem{prop}{Proposition}[section]
\theoremstyle{definition}
\numberwithin{equation}{section}
\begin{document}
\title{Weak Solutions of the Chern-Ricci flow on compact complex surfaces}
\author{Xiaolan Nie\vspace{-4ex}}
\maketitle
\begin{abstract}
In this note, we prove the existence of weak solutions of the Chern-Ricci flow through blow downs of exceptional curves, 
as well as backwards smooth convergence away from the exceptional curves on compact complex surfaces. The smoothing 
property for the Chern-Ricci flow is also obtained on compact Hermitian manifolds of dimension $n$ under a mild assumption.\end{abstract}

\section{Introduction}

 Let $(M, g_0)$ be a compact Hermitian manifold with associated $(1,1)$ form $\om_0$. The Chern-Ricci flow starting at $\omega_0$ is given by
\begin{align}\p \omega=-\Ric (\omega),\ \ \ \ \ \omega|_{t=0}=\omega_0,
\end{align}where $\Ric(\omega)=-\sqrt{-1}\partial\bar{\partial}\log \det g$ is the Chern-Ricci form of $\omega$. It was introduced by Gill~\cite{Gill} on 
manifolds with vanishing first Bott-Chern class and investigated by Tosatti and Weinkove~\cite{TW4} in details on general Hermitian manifolds.
 If the initial metric is K\"{a}hler, then it coincides with the K\"{a}hler-Ricci flow. 

Many nice properties of the flow have been found (see ~\cite{TW4,TW5, TWY, FTWZ}, etc.), some of which are analogous to those of the K\"{a}hler-Ricci flow.
 Let $$T=\sup\{t\geq 0|\exists\psi\in C^{\infty}(M),\,  \ \omega_0-t\Ric(\omega_0)+\sqrt{-1}\partial\bar{\partial }\psi>0\}.$$ It was proved in ~\cite{TW4} that
 there exists a unique maximal solution $\om(t)$ to the Chern-Ricci flow (1.1) on [0, T).
 It is expected that the Chern-Ricci flow is closely related to the geometry of the underlying manifold. In the case of $n=2$ (complex surfaces), the behavior
 of the flow is particularly interesting. 
Let $M$ be a compact complex surface with $\om_0$ a Gauduchon metric.  It was proved in~\cite{TW4} that the Chern-Ricci flow starting at $\om_0$ exists 
until either the volume of $M$ goes to zero, or the volume of a curve of negative self-intersection goes to zero. The Chern-Ricci flow is said
 to be \textit{collapsing} (\textit{non-collapsing}) at $T$ if the volume of $M$ with respect to $\om(t)$ goes to zero (stays positive) as $t\ra T^-$. \\

Suppose that the Chern-Ricci flow starting at $\om_0$ is non-collapsing at $T<\infty$.   It was shown in ~\cite{TW4} that $M$ contains finitely many 
disjoint  (-1)-curves $E_1, ... , E_k$ and thus there exists a map $\pi: M\rightarrow N$ onto a complex surface $N$ contracting each $E_i$ to a point $y_i\in N$.  
It was conjectured in ~\cite{TW5} that the flow blows down the exceptional curves and continues in a unique way on a new surface $N$. The conjecture requires
 smooth convergence of the metrics away from the (-1)-curves and global Gromov-Hausdorff convergence as $t\ra T^-$ and $t\ra T^+$. Denote 
$M'=M\setminus\cup_{i=1}^kE_i$. In ~\cite{TW5}, Tosatti and Weinkove prove the following theorem. 

\begin{thm}{\normalfont (Tosatti-Weinkove)} With the notation above, then the metrics $\om(t)$ converge to a smooth Gauduchon metric $\om_T$ on 
$M'$ in $C^{\infty}_{\loc}(M')$ as $t \ra T^-.$ Assume in addition
\begin{align}\omega_0-T\Ric(\omega_0)+\sqrt{-1}\partial\bar{\partial}f=\pi^*\om_N
\end{align}
for some $f\in C^{\infty}(M,\mathbb{R})$ and $\om_N$ a smooth (1,1) form  on $N$. Then there exists a distance function $d_T$ on $N$ such that $(N,d_T)$ is a
 compact metric space and $(M, g(t))\rightarrow (N,d_T)$ as $t\rightarrow T^-$ in the Gromov-Hausdorff sense.\end{thm}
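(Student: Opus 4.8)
The plan is to reduce the Chern--Ricci flow to a scalar parabolic complex Monge--Amp\`ere equation, run an interior a priori estimate program on $M'$ for the first assertion, and then push the limiting metric down to $N$ to build $(N,d_T)$ and prove Gromov--Hausdorff convergence for the second.

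Set $\hat\om_t=\om_0-t\,\Ric(\om_0)$. Since $\om_0$ is Gauduchon and $\Ric(\om_0)$ is $\sdd$-exact, every $\hat\om_t$ is again $\sdd$-closed, $\hat\om_t>0$ for $t<T$, while $\hat\om_T\ge 0$ is nef with $\int_M\hat\om_T^2>0$ by the non-collapsing hypothesis, the curves $E_i$ being precisely the irreducible curves on which $\hat\om_T$ integrates to zero. With $\Om=\om_0^2$ the flow becomes
\[
\p\vp=\log\frac{(\hat\om_t+\sdd\vp)^2}{\om_0^2},\qquad \vp|_{t=0}=0,\qquad \om(t)=\hat\om_t+\sdd\vp.
\]
Following \cite{TW4}, the maximum principle (applied to $\vp$ and to the evolution equation of $\p\vp$, whose operator is the Chern Laplacian of $\om(t)$), together with the Hermitian analogue of Ko\l odziej's estimate for the big and nef class $[\hat\om_T]$, gives the global bounds $\|\vp\|_{C^0(M)}+\|\p\vp\|_{C^0(M)}\le C$ for all $t<T$. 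For the interior second order estimate one fixes $K\Subset M'$ and a smooth $\sigma$ with $\hat\om_T+\sdd\sigma\ge\ep_K\om_0$ on a neighborhood of $K$ (such $\sigma$ exists because $[\hat\om_T]$ is positive away from $\cup_i E_i$; under (1.2) one may take $\sigma=f$, corrected by a small multiple of a local potential with logarithmic pole along $\cup_i E_i$), and applies a Tosatti--Weinkove type maximum principle estimate, localized by a cutoff supported near $K$, to a quantity of the form $\log\tr_{\om_0}\om(t)-A\vp+A'\sigma$; this yields $\tr_{\om_0}\om(t)\le C_K$ on $K$, and with the lower bound on the volume form it makes $\om(t)$ uniformly equivalent to $\om_0$ on $K$. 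Local parabolic Evans--Krylov and Schauder estimates then give uniform $C^\infty$ bounds on compact subsets of $M'$ for $t$ near $T$. Since $\p\vp$ is bounded, $\vp$ is uniformly Lipschitz in $t$ and hence converges uniformly on $M$ as $t\ra T^-$; the interior bounds upgrade this to $\vp(t)\ra\vp_T$ in $C^\infty_\loc(M')$, so $\om(t)\ra\om_T:=\hat\om_T+\sdd\vp_T$ in $C^\infty_\loc(M')$. Finally $\om_T$ is Gauduchon, since the Chern--Ricci flow preserves the Gauduchon condition ($\sdd\om(t)=\sdd\om_0=0$) and this passes to smooth limits.

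For the second assertion, (1.2) reads $\hat\om_T+\sdd f=\pi^*\om_N$, so on $M'$ we have $\om_T=\pi^*\om_N+\sdd(\vp_T-f)$. As $\pi\colon M'\ra N\setminus\{y_i\}$ is a biholomorphism, $\psi_T:=(\vp_T-f)\circ\pi^{-1}$ is smooth there with $\om_N+\sdd\psi_T>0$, is bounded (by the global $C^0$ bound above), and extends to a bounded $\om_N$-plurisubharmonic function on $N$; thus $\om_{N,T}:=\om_N+\sdd\psi_T$ is a metric on $N$, smooth and positive away from $\{y_i\}$. Let $d_T$ be the length metric of $\om_{N,T}$ on $N\setminus\{y_i\}$, extended to $N$ as the infimum of lengths of paths; using a local model at each $y_i$ and the boundedness of $\psi_T$ one checks that $d_T$ is finite, induces the manifold topology and separates points (in particular the $y_i$ lie at positive distance from one another and from every other point), and that $(N,d_T)$ is complete, hence a compact metric space. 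To obtain $(M,g(t))\ra(N,d_T)$ in the Gromov--Hausdorff sense one shows that $\pi$ is a $\delta(t)$-approximation with $\delta(t)\ra 0$: given $\delta>0$, choose small neighborhoods $U_i$ of $E_i$ with $\mathrm{diam}_{d_T}(\pi(U_i))<\delta$; on $M\setminus\bigcup_i U_i$ the metrics $g(t)$ converge smoothly, so $|d_{g(t)}(x,y)-d_T(\pi(x),\pi(y))|<\delta$ for such $x,y$ and $t$ near $T$, while the $U_i$ contribute at most $\delta$ more once one knows $\mathrm{diam}_{g(t)}(E_i)\ra 0$ as $t\ra T^-$ (so that every point of $U_i$ is $g(t)$-close to $\partial U_i$). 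Patching yields the required $\delta(t)$-approximations, whence the Gromov--Hausdorff convergence.

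The main difficulty is twofold. The global $C^0$ bound on $\vp$ is more delicate than in the K\"ahler case because it rests on the Hermitian pluripotential estimate for big and nef classes rather than on Ko\l odziej's theorem. More serious is the diameter control near the exceptional curves: proving $\mathrm{diam}_{g(t)}(E_i)\ra 0$ as $t\ra T^-$ (together with the attendant uniform collar control, from which a uniform diameter bound for $(M,g(t))$ then follows) is exactly the step where the restriction to complex surfaces and the structure of the $(-1)$-curves are essential, and it cannot be obtained by soft arguments — one needs a model computation in a tubular neighborhood of $E_i$, exploiting that the area $\int_{E_i}\om(t)$ is $O(T-t)$ and hence tends to zero.
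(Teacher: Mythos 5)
This statement is Theorem 1.1, which the paper itself does not prove: it is quoted as background from Tosatti--Weinkove \cite{TW5} (with the K\"ahler antecedent in Song--Weinkove \cite{SW1}), and your outline does follow the same overall route as that source --- scalar reduction to a parabolic complex Monge--Amp\`ere equation, estimates that degenerate along the exceptional curves, local higher-order theory on $M'$, and a Song--Weinkove-style Gromov--Hausdorff argument on the blow-down. As a proof, however, it has genuine gaps.

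First, the reference forms and the $C^0$ estimate. The form $\hat\om_t=\om_0-t\Ric(\om_0)$ need not be positive for $t<T$ (the definition of $T$ only gives positivity after adding $\sdd\psi$ for some $\psi$), so the scalar reduction must be run with twisted reference forms. More importantly, you derive $\|\vp\|_{C^0(M)}\le C$ from a ``Hermitian analogue of Ko{\l}odziej's estimate for the big and nef class $[\hat\om_T]$''; no such degenerate-class pluripotential theorem is available in the Hermitian setting, and it is not how \cite{TW5} proceeds. There, the first assertion (smooth convergence on $M'$) is obtained without a uniform lower bound on $\vp$: one uses barriers built from $\log|s|_h^2$, with $s$ cutting out $\cup_iE_i$, to get $\vp\ge\delta\log|s|_h^2-C$ and trace bounds of the form $\tr_{\om_0}\om\le C|s|_h^{-2K}$ --- precisely the estimates (3.1) that the present paper imports --- while the uniform two-sided bound on $\vp$ (Lemma 3.3 of \cite{TW5}) uses the additional hypothesis (1.2). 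Second, and more seriously, the whole Gromov--Hausdorff half of the theorem rests on the diameter and collar estimates near the $E_i$: that $\mathrm{diam}_{g(t)}(E_i)\to 0$, that small neighborhoods $U_i$ have uniformly small $g(t)$-diameter (so that minimizing paths cutting through $U_i$ cannot distort distances), and that $(N,d_T)$ is a compact metric space inducing the correct topology at the points $y_i$. You explicitly defer all of this (``one needs a model computation in a tubular neighborhood''), but these length estimates --- along $E_i$ and in the normal directions, exploiting $\int_{E_i}\om(t)=O(T-t)$ together with the barrier estimates above --- are the heart of the argument in \cite{SW1} and \cite{TW5}. As written, the proposal is an accurate roadmap of the cited proof rather than a proof of either assertion.
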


Denote $\tilde{\om}_T=(\pi^{-1})^*\om_T$ the push-down of the limiting current $\om_T$ to $N$.\
To continue the flow on $N$, first we need to show that the Chern-Ricci flow on $N$ with singular initial metric $\tilde{\om}_T$ has a unique smooth solution on 
$(T, T']$ for some $T'>T$.  To prove the smooth convergence on compact subsets of $N'=N\setminus\{y_1, ... , y_k\}$, we need more precise 
estimates near the exceptional curves as $t\ra T^-$ and also estimates on $[T, T']\times N'$. We may assume $\om_N$ in condition (1.2) to be a 
Gauduchon metric after replacing $f$ by a new function (see Lemma 3.2 in ~\cite{TW5}) and denote
\[T_N=\sup\{t\geq T|\exists\psi\in C^{\infty}(M), \ \om_N-(t-T)\Ric(\omega_N)+\sqrt{-1}\partial\bar{\partial }\psi>0\}.\]

Using the construction of Song-Weinkove for the K\"{a}hler-Ricci flow in ~\cite{SW1}, we prove the following theorem.
\begin{thm} Assume that the condition (1.2) is satisfied. With the notation above, there exists a unique maximal solution $\om(t)$ to the
 equation: $$\om|_{t=T}=\tilde{\om}_T,\ \ \ \p\om=-\Ric(\om), \ \ for\  t\in(T, T_N),$$ which is smooth on $(T, T_N)$. Moreover, $\om(t)$
 converges to $\tilde{\om}_T$ in $C^{\infty}_{\loc}(N')$ as $t\rightarrow T^+.$
\end{thm}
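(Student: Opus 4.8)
\bigskip
\noindent\textbf{Proof proposal.} The plan is to adapt the argument of Song--Weinkove~\cite{SW1} for the K\"ahler--Ricci flow, incorporating the Hermitian modifications of~\cite{TW4,Gill} needed to control the torsion of $\om_N$. First, reduce to a scalar parabolic Monge--Amp\`ere equation: keep the smooth Gauduchon metric $\om_N$ of~(1.2) and set $\Om=\om_N^n$, so that $-\sdd\log\Om=-\Ric(\om_N)$. For $t\in[T,T_N)$ put $\hat\om_t=\om_N-(t-T)\Ric(\om_N)$, a smooth family of real $(1,1)$ forms which is positive for $t$ close to $T$ and, modulo $\sdd\psi$, positive throughout $[T,T_N)$. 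Writing $\om=\hat\om_t+\sdd\vp$, the Chern--Ricci flow $\p\om=-\Ric(\om)$ becomes
\[
\p\vp=\log\frac{(\hat\om_t+\sdd\vp)^n}{\Om},\qquad \hat\om_t+\sdd\vp>0,\qquad \vp|_{t=T}=\psi_T ,
\]
where $\psi_T$ is the global potential of the push-down, $\tl\om_T=\om_N+\sdd\psi_T$. The near-$E_i$ estimates of~\cite{TW5} as $t\ra T^-$ show $\psi_T\in L^\infty(N)\cap C^\infty(N')$; this is the only place the singular nature of the initial datum enters.

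Next, approximate and derive two tiers of a priori estimates. Pick smooth $\psi_{T,\ep}$ on $N$ with $\om_N+\sdd\psi_{T,\ep}>0$, decreasing to $\psi_T$, and converging to $\psi_T$ in $C^\infty_{\loc}(N')$ (mollification away from the $y_i$, where $\psi_T$ is smooth, suffices); by~\cite{TW4} the equation with datum $\psi_{T,\ep}$ has a maximal smooth solution $\vp_\ep$ on $[T,T^\ep_N)$, with $T^\ep_N\ra T_N$. Uniformly in $\ep$ and for $T'<T_N$ I would prove: \emph{(a)} $\|\vp_\ep\|_{L^\infty([T,T']\times N)}+\|\dot\vp_\ep\|_{L^\infty([T,T']\times N)}\le C$, the upper bounds and the lower bound near $t=T$ (where $\hat\om_t>0$) coming from the parabolic maximum principle and the uniform bounds on $\psi_{T,\ep}$, and the lower bound on the whole interval from the Hermitian version of Ko\l odziej's $L^\infty$ estimate~\cite{TW4}. \emph{(b)} A \emph{degenerate global} second-order bound $\tr_{\om_N}\om_\ep\le C_\sigma$ on $[T+\sigma,T']\times N$ for each $\sigma>0$, obtained by applying $\ptd$ to a quantity controlled at $t=T$ independently of the initial regularity (e.g.\ $(t-T)\log\tr_{\om_N}\om_\ep-A\vp_\ep$), the Chern--Laplacian/Aubin--Yau computation producing torsion terms of $\om_N$ that are absorbed via Cauchy--Schwarz against $-\tr_{\om_\ep}\om_N$ and the bound in (a); with the higher-order bootstrap this already yields a solution smooth on $(T,T_N)\times N$, the asserted instantaneous smoothing. \emph{(c)} A \emph{local, non-degenerate} second-order bound: for every $K\Subset N'$ there is $C_K$ with $C_K^{-1}\om_N\le\om_\ep\le C_K\om_N$ on $K\times[T,T']$, proved by the same Chern--Laplacian argument run with a spatial cutoff supported in $N'$ and an auxiliary term $e^{-B\rho}$, $\rho$ a function blowing up at the $y_i$; the cutoff boundary terms are controlled by $\tr_{\om_\ep}\om_N$, the datum $\tr_{\om_N}\om_\ep(\cdot,T)\le C_K$ on $K$ follows from $\psi_{T,\ep}\ra\psi_T$ in $C^\infty(K)$, and the remaining input is precisely the fine behaviour of the flow near $E_i$ as $t\ra T^-$ furnished by~\cite{TW5}; local parabolic Evans--Krylov and Schauder theory then give uniform $C^k(K\times[T,T'])$ bounds on $\vp_\ep$ for all $k$.

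Finally, pass to the limit and conclude. By the monotonicity of $\psi_{T,\ep}$ and the estimates above, $\vp_\ep$ converges to $\vp$, smooth on $(T,T_N)\times N$ and on $[T,T_N)\times N'$, solving the parabolic equation with $\vp|_{t=T}=\psi_T$; then $\om(t)=\hat\om_t+\sdd\vp(t)$ is the desired solution, and it is maximal since a solution past $T_N$ would produce a $\psi$ violating the definition of $T_N$. For the convergence as $t\ra T^+$: on $K\Subset N'$, Step (c) gives $C_K^{-1}\om_N\le\om_\ep\le C_K\om_N$, hence $\dot\vp_\ep=\log(\om_\ep^n/\Om)$ is bounded on $K\times[T,T']$, so $|\vp_\ep(t)-\psi_{T,\ep}|\le C_K(t-T)$ on $K$; letting $\ep\ra0$ and using the $C^k$ bounds gives $\om(t)\ra\tl\om_T$ in $C^\infty_{\loc}(N')$. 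Uniqueness follows from the maximum principle: two such solutions have potentials $\vp_1,\vp_2$ with $w=\vp_1-\vp_2$ bounded, smooth on $(T,T_N)\times N$, solving a linear parabolic equation $\p w=L_t w$ with $L_t$ genuinely elliptic for $t>T$ and with no zeroth-order term, and $w\ra0$ as $t\ra T^+$ (uniformly on $N$, as in~\cite{SW1}); running the maximum principle on $[T+\delta,T']$ and letting $\delta\ra0$ forces $w\equiv0$.

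\emph{Main obstacle.} The heart of the matter is Step (c): the second-order estimate on compact subsets of $N'$ that is uniform up to $t=T$. Two difficulties compound — the torsion terms intrinsic to the non-K\"ahler Chern--Ricci flow, which must be absorbed into the good negative terms of the Chern--Laplacian inequality, and the simultaneous degeneration of $\om(t)$ as $t\ra T$ and near the contracted points $y_i$, which forces a careful choice of the blow-up function $\rho$ and an essential use of the precise near-$E_i$ estimates of~\cite{TW5} on the $t\ra T^-$ side. Steps (a), (b), the reduction, and the final limiting argument are, by contrast, relatively routine adaptations of known arguments.
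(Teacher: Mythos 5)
Your overall architecture (regularize the initial datum, prove estimates uniform in the regularization with a barrier at the $y_i$, pass to the limit, then apply Gill-type local estimates away from $y_i$) is the same as the paper's, but there is a genuine gap at the decisive step: the construction of the approximants $\psi_{T,\ep}$ and the uniform-in-$\ep$ second-order control at $t=T$ near the blown-down points. You propose to obtain $\psi_{T,\ep}$ by mollifying $\psi_T$ and to run the parabolic Chern--Laplacian estimate ``with a spatial cutoff''. Neither half works as stated. Mollification of the merely continuous $\om_N$-psh function $\psi_T$ need not stay $\om_N$-psh (nor decreasing) on a compact Hermitian surface and, more importantly, gives no uniform weighted Hessian bound of the form $\tr_{\om_N}(\om_N+\sdd\psi_{T,\ep})\le C|s|_h^{-2\la}$ near $y_i$; without such a bound at $t=T$ the barrier quantity $H=\log\tr_{\om_N}\om_\ep-A\tl\vp_\ep+1/(\tl\vp_\ep+C_0)$ (with $\tl\vp_\ep=\vp_\ep-\ep_0\log|s|_h^2$) is not bounded at the initial time, and a cutoff-localized version fails because the maximum can sit where the cutoff's derivatives act on $\log\tr_{\om_N}\om_\ep$, producing terms that cannot be absorbed --- local-in-space second-order estimates for complex Monge--Amp\`ere equations are not obtained by naive cutoffs, which is exactly why the known arguments (Song--Weinkove, Phong--Sturm, and this paper's Lemma 4.3) are global maximum-principle arguments with a barrier plus control of the data at $t=T$ on all of $N\setminus\{y_0\}$.

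The paper closes this gap by a different construction: $\psi_{T,\ep}$ solves the elliptic equation $(\om_N+\sdd\psi_{T,\ep})^n=C_\ep\Om_\ep$ (Tosatti--Weinkove), with $\Om_\ep$ built from the flow metric $\om(T-\ep)$ damped by $|s|_h^{2K}/(\ep+|s|_h^{2K})$, and $\psi_{T,\ep}\to\psi_T$ in $L^\infty(N)$ follows from Ko\l odziej stability together with Lemma 2.1 ($\tl{\om}_T^n/\om_N^n\in L^p$ and continuity of $\psi_T$ on all of $N$ --- a fact you also rely on implicitly, e.g.\ when you let $w\to 0$ uniformly on $N$ in the uniqueness step, but never establish). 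To exploit this one needs $|\De_{\om_N}F_\ep|\le C|s|_h^{-2\beta}$ uniformly in $\ep$, and this is where the real new content lies: the weighted global Calabi-type third-order estimate and the weighted Ricci bound for the flow on $M$ as $t\to T^-$ (Propositions 3.1 and 3.2), which go strictly beyond the $C^\infty_{\loc}(M')$ convergence of Tosatti--Weinkove that you invoke as ``the fine behaviour near $E_i$''. These feed into Lemma 4.1 (weighted bound on $\tl{\om}_{T,\ep}$ at $t=T$), the volume bound of Lemma 4.2, and then the global barrier argument of Lemma 4.3; similarly, the continuity of $\vp$ down to $t=T$ on all of $N$ is obtained via the smoothing Theorem 1.3, which needs the $L^p$ property of the initial Monge--Amp\`ere density, a uniform control your mollified data would not obviously satisfy. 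Steps (a), (b), the reduction, and the final limiting/uniqueness arguments in your proposal are consistent with the paper, but as written the proof of the key step (c) is missing its essential inputs.
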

When $\om_0$ is K\"{a}hler, the result is contained in the work of Song-Weinkove ~\cite{SW1}. We use the techniques
 of Song-Weinkove ~\cite{SW1}, Tosatti-Weinkove ~\cite{TW4,TW5} and a trick of Phong-Sturm ~\cite{PS} in the proof of the above theorem. \\

We actually obtain an existence theorem on $M$ of general dimension $n$. Let $\Omega$ be a smooth volume 
form on $M$ and $\hat{\om}_t=\om_0+t\chi$, where $\chi$ is a  closed (1,1) form locally defined by $\chi=\sdd\log\Omega$. 
Denote $PSH(M,\omega_0)$ the set of $\omega_0$-plurisubharmonic functions and let $$PSH_p(\omega_0, \Omega)=\{\varphi\in PSH(M,\omega_0)\cap L^{\infty}(M)|\frac{(\omega_0+\sqrt{-1}\partial\bar{\partial}\varphi)^n}{\Omega}\in L^p(M)\}.$$  Following the arguments of Song-Tian in \cite{ST}, we prove the smoothing property for the Chern-Ricci flow. \vspace{.62cm}

\begin{thm} Suppose that $\omega_0'=\omega_0+\sqrt{-1}\partial\bar{\partial}\varphi_0$ for some $\varphi_0\in PSH_p(\omega_0, \Omega), \ p>1$. 
Assume that $\om_0$ satisfies 

 \begin{align}\label{1.3}
\forall\  u\in PSH(M,\omega_0)\cap L^{\infty}(M),\ \ \  \int_M(\omega_0+\sqrt{-1}\partial\bar{\partial}u)^n=\int_M\omega_0^n,
 \end{align}

then there exists a unique family of smooth metrics $\om(t)$ on $(0, T)$ such that
\begin{itemize}
\item[(i)] $\p \om=-\Ric (\omega)$, for $t\in(0, T)$.
\item[(ii)] There exists $\vp\in C^0([0, T)\times M)\cap C^{\infty}((0, T)\times M)$ such that $\om=\hat{\om}_t+\sdd\vp$ and $\varphi(t)\rightarrow \varphi_0$ in $L^{\infty}(M)$ as $t\rightarrow 0^+$.
 \end{itemize}
    In particular, $\omega(t)\ra\omega_0'$ in the sense of currents as $t\rightarrow 0^+$.
\end{thm}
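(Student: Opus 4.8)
The plan is to obtain $\om(t)$ as the uniform limit of the smooth Chern--Ricci flows starting from smooth metrics that approximate $\om_0'$ and are produced by solving \emph{static} complex Monge--Amp\`ere equations; the regularity, the initial value, and the uniqueness will all be read off from a priori estimates that are uniform in the approximation parameter. First I pass to a scalar equation. Since $-\Ric(\om_0)=\chi+\sdd\log(\om_0^n/\Om)$, the form $\hat\om_t=\om_0+t\chi$ differs from $\om_0-t\Ric(\om_0)$ by $\sdd$ of a smooth function, hence admits smooth positive representatives for $t<T$; writing $\om=\hat\om_t+\sdd\vp$ turns (1.1) into
\[
\partial_t\vp=\log\frac{(\hat\om_t+\sdd\vp)^n}{\Om},\qquad\hat\om_t+\sdd\vp>0.
\]

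Next I build the approximants. Let $f=(\om_0')^n/\Om\in L^p(M)$ and, for $j\in\mathbb{N}$, let $f_j>0$ be smooth, obtained by mollifying $\min(f,j)$ and rescaling so that $\int_M f_j\,\Om=\int_M\om_0^n$; then $f_j\ra f$ in $L^p(M)$. Condition (\ref{1.3}) enters decisively here: by the Tosatti--Weinkove theorem on the complex Monge--Amp\`ere equation on compact Hermitian manifolds there are a smooth $v_j$ and a constant $b_j$ with $(\om_0+\sdd v_j)^n=e^{b_j}f_j\,\Om$ and $\om_0+\sdd v_j>0$; integrating and invoking (\ref{1.3}), which gives $\int_M(\om_0+\sdd v_j)^n=\int_M\om_0^n$, together with the normalization of $f_j$, forces $b_j=0$, so $(\om_0+\sdd v_j)^n=f_j\,\Om$. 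Condition (\ref{1.3}) also underlies the Kolodziej-type stability estimate, which, since $f_j\ra f$ in $L^p(M)$, gives $v_j\ra\vp_0$ in $L^\infty(M)$ once the additive constant is fixed. Running the Chern--Ricci flow from the smooth metric $\om_0+\sdd v_j$ produces, by Tosatti--Weinkove, a smooth solution; a short computation using $(\om_0+\sdd v_j)^n=f_j\,\Om$ shows its maximal interval is $[0,T)$ for every $j$, and the associated potentials $\vp_j$ solve the displayed equation with $\vp_j|_{t=0}=v_j$.

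Fix $T'<T$. All the $\vp_j$ solve the same equation with the same smooth reference $\hat\om_t$ and satisfy $\hat\om_t+\sdd\vp_j>0$, so the parabolic comparison principle gives $\sup_M|\vp_j(\cdot,t)-\vp_k(\cdot,t)|\le\sup_M|v_j-v_k|$ on $[0,T']$; hence $(\vp_j)$ is Cauchy in $C^0([0,T']\times M)$ and converges to some $\vp\in C^0([0,T)\times M)$ with $\vp|_{t=0}=\lim_j v_j=\vp_0$, so in particular $\vp(\cdot,t)\ra\vp_0$ in $L^\infty(M)$ as $t\ra0^+$, and the $\vp_j$ are uniformly bounded in $C^0([0,T']\times M)$. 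The maximum principle applied to $t\partial_t\vp_j-\vp_j-nt$ and to $t\partial_t\vp_j+\vp_j$ then gives $-C/t\le\partial_t\vp_j\le C/t$ on $[0,T']\times M$ (in the lower bound one uses $\tr_{\om_j}\om_0\ge c\,e^{-\partial_t\vp_j/n}$ at an interior minimum), whence $e^{-C/\delta}\Om\le\om_j^n\le e^{C/\delta}\Om$ on $[\delta,T']\times M$. Feeding these bounds and the $C^0$ bound into the Tosatti--Weinkove second-order estimate (the Hermitian Aubin--Yau inequality, where a large multiple of the potential and a time cut-off absorb the torsion terms and the boundary contribution at $t=\delta$) yields $c(\delta)\tilde\om_t\le\om_j\le C(\delta)\tilde\om_t$ on $[\delta,T']\times M$, where $\tilde\om_t>0$ is a fixed smooth family with $\tilde\om_t-\hat\om_t=\sdd\rho_t$, $\rho_t$ smooth; then the parabolic Evans--Krylov estimate and Schauder bootstrapping give $C^\infty$ bounds on compact subsets of $(0,T)\times M$, all uniform in $j$. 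Consequently $\vp_j\ra\vp$ in $C^\infty_{\loc}((0,T)\times M)$, so $\vp\in C^\infty((0,T)\times M)$ and $\om:=\hat\om_t+\sdd\vp>0$ is a smooth solution of (1.1) on $(0,T)$ with $\om(t)\ra\om_0'$ as currents as $t\ra0^+$. Uniqueness follows by applying the comparison principle on $[\tau,T']$ to any two solutions with the regularity of (ii) and letting $\tau\ra0^+$, since both converge to $\vp_0$ in $L^\infty$ there.

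The main obstacle is the uniform interior estimate. In the Hermitian setting the torsion of $\om_0$ contributes first- and zeroth-order terms of indefinite sign to the evolution of $\tr_{\tilde\om_t}\om_j$, which must be controlled by the Tosatti--Weinkove argument rather than the bare Aubin--Yau computation; moreover the constants have to be made to depend only on $\sup_{[0,T']\times M}|\vp_j|$ and on $\sup_{[\delta/2,T']\times M}|\partial_t\vp_j|$ --- both uniform in $j$ --- and not on $\partial_t\vp_j$ near $t=0$, which diverges as $j\ra\infty$. The second delicate point, and the reason condition (\ref{1.3}) is imposed, is that on a general compact Hermitian manifold the total Monge--Amp\`ere mass of a bounded $\om_0$-plurisubharmonic potential is not pinned down; without (\ref{1.3}) neither the solvability of the static equations with the correct normalizing constant nor the $L^\infty$-stability estimate controlling $v_j-\vp_0$ is available, and (\ref{1.3}) is exactly what restores these pluripotential-theoretic inputs, which hold automatically when $\om_0$ is K\"ahler.
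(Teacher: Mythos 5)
Your scheme is essentially the paper's (and Song--Tian's): approximate $\varphi_0$ in $L^{\infty}$ by smooth strictly $\omega_0$-plurisubharmonic potentials (Tosatti--Weinkove's static Monge--Amp\`ere theorem, with condition (1.3) fixing the normalizing constant and Kolodziej--Nguyen/Dinew--Ko{\l}odziej stability giving $v_j\to\varphi_0$ --- this is what the paper quotes from [N, \S 4]), run the smooth flows, reduce to the scalar parabolic Monge--Amp\`ere equation, get the $C^0$ Cauchy property from the comparison principle, prove $t$-weighted bounds on $\dot\varphi_j$ and $\tr_{\omega_0}\omega_j$ by maximum-principle arguments, and pass to the limit. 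Two execution-level remarks: for the higher-order estimates the paper does not invoke parabolic Evans--Krylov but proves a Calabi-type third-order estimate (following Sherman--Weinkove) plus a Ricci bound and then cites standard parabolic theory; in the Hermitian setting a bound on the complex Hessian does not feed directly into the real Evans--Krylov theorem, so you should either cite a complex parabolic $C^{2,\alpha}$ result or Gill's local estimates (as the paper does elsewhere). Also, your lower bound on $\dot\varphi_j$ via $t\dot\varphi_j+\varphi_j$ produces the term $\tr_{\omega_j}(\omega_0+2t\chi)$, and $\omega_0+2t\chi=\hat\omega_{2t}$ need not be positive for all $t\le T'$; use $t\dot\varphi_j+A\varphi_j$ with $A$ large so that $A\hat\omega_t+t\chi\ge c\,\omega_0$.

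The step that does not go through as written is uniqueness. The theorem asserts uniqueness of the metrics $\omega(t)$ subject to (i)--(ii), and (ii) only supplies a potential $\varphi$ with $\omega=\hat\omega_t+\sdd\varphi$ and $\varphi(t)\to\varphi_0$. Taking $\sdd$ of the flow equation shows that such a $\varphi$ satisfies $\partial_t\varphi=\log\big((\hat\omega_t+\sdd\varphi)^n/\Omega\big)+f(t)$ for some function $f$ of time alone (a global function with vanishing $\sdd$ on a compact complex manifold is constant in space), not the normalized scalar equation itself. Hence the potentials of two solutions with the regularity of (ii) need not solve the same parabolic equation, and the comparison principle on $[\tau,T']$ cannot be applied to them directly; your argument as stated only yields uniqueness of solutions of the normalized scalar equation. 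The paper spends the last paragraph of its proof on exactly this point (equation (5.6)): any potential as in (ii) differs from a solution of the normalized equation by $\int_0^tf(s)\,ds$, and uniqueness for the normalized equation then pins down the metric. Alternatively, your $\tau\to0^+$ idea can be repaired by applying the maximum and minimum principles to the spatial oscillation of $\tilde\varphi-\varphi$, which is insensitive to the $f(t)$-ambiguity and shows $\tilde\varphi-\varphi$ is constant in space, hence $\tilde\omega=\omega$; but some such argument must be supplied.
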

When $(M, \om_0)$ is a K\"{a}hler manifold, the result is contained in the work of Song and Tian ~\cite{ST} (see also ~\cite{CTZ}). When $M$ is a compact complex surface with a Gauduchon metric $\om_0$,  condition (\ref{1.3}) is satisfied and the above result follows. The proof of Theorem 1.3 is given in section 5.\\

\noindent\textbf{Acknowledgements.} This note is based on chapter 3 of the author's Ph.D. thesis. The author would like to
 thank her advisor Prof. Jiaping Wang for constant support, encouragement and many helpful discussions. The author also 
thanks Prof. Valentino Tosatti and Prof. Ben Weinkove for their encouragement and helpful discussions. In addition, the author is grateful to Haojie Chen for many helpful conversations.

After this note was completed, the author learned that related results were proved by Tat Dat T\^{o} in \cite{To}. The author would like to thank Tat Dat T\^{o} for sending his preprint.

\section{Construction of the weak solution}
In this section, we construct the solution in Theorem 1.2 explicitly. We will follow the construction of Song-Weinkove for the K\"{a}hler-Ricci flow ~\cite{SW1}. 

Without loss of generality, assume that $M$ contains only one exceptional curve $E$ for simplicity. Assuming that the condition (1.2) 
holds for a Gauduchon metric $\om_N$ on $N$. Define $$\hat{\om}_{t}=(1-\frac{t}{T})\om_0+\frac{t}{T}\pi^*\om_N,$$which are smooth
nonnegative forms on $[0, T]$. Let $\Omega=e^{f/T}\om_0^n$. (From now on, we will write n instead of 2 whenever our calculations 
hold for $n\geq 2$.)  If $\vp$ solves the parabolic complex Monge-Amp\`{e}re equation
\begin{align}\frac{\pa\vp}{\pa t}=\log\frac{(\hat{\om}_t+\sdd\vp)^n}{\Omega},\ \ \ \ \vp|_{t=0}=0\end{align} for $t<T$, then $\om=\hat{\om}_t+\sdd\vp$ 
solves the Chern-Ricci flow (1.1) on $[0, T)$.

By Lemma 3.3 of ~\cite{TW5}, there exists a uniform constant $C$ such that $|\vp|\leq C$ and $\dot{\vp}\leq C$, where we write $\dot{\vp}$
 for $\frac{\pa\vp}{\pa t}$. Then it follows that  as $t\ra T^-$, $\vp(t)$ converges pointwise on $M$ to a bounded function $\vp_T$ with $$\om_T=\hat{\om}_T+\sdd\vp_T\geq 0.$$
In particular, $\om(t)\ra \om_T$ in the sense of currents as $t\ra T^- $. From Lemma 5.1 of ~\cite{SW1} $\vp_T$ must be constant on $E$ 
since $\sdd\vp_T|_E=\om_T|_E\geq 0.$ Thus $\psi_T=(\pi^{-1})^*\vp_T$ is a bounded function on $N$ which is smooth on $N\setminus\{ y_0\}$ as $\pi$
 is the blow down map contracting $E$ to $y_0$. Then \begin{align}\tilde{\om}_T=\om_N+\sdd\psi_T\geq 0.
\end{align} 
\begin{lem}There exists $p>1$ such that $\tilde{\om}_T^n/\om_N^n\in L^p(N)$. Moreover, $\psi_T$ is continuous on $N$.
\end{lem}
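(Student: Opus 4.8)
The plan is to push the volume bound coming from the flow down to $N$ through $\pi$, and then feed the resulting $L^p$ bound into the regularity theory for complex Monge--Amp\`ere equations. First I would record that $\hat\om_T=\pi^*\om_N$ and that Lemma 3.3 of~\cite{TW5} gives $\dot\vp\le C$, so that $\om(t)^n=(\hat\om_t+\sdd\vp)^n\le e^C\Omega$ on $[0,T)\times M$. Since $\om(t)\ra\om_T$ in $C^\infty_{\loc}(M')$ as $t\ra T^-$ by Theorem 1.1, the limit $\om_T$ is a smooth Gauduchon metric on $M'$ and the inequality passes to the limit pointwise there: $\om_T^n\le e^C\Omega=e^Ce^{f/T}\om_0^n$ on $M'$.

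Next I would compare $\Omega$ with $\pi^*\om_N^n$ near $E$. Since $\pi\colon M\ra N$ is the blow up of the smooth point $y_0$, the adjunction/canonical bundle formula gives $K_M=\pi^*K_N+(n-1)E$; fixing a defining section $s_E\in H^0(M,[E])$ with $\{s_E=0\}=E$ and a smooth Hermitian metric $h$ on $[E]$, this produces a smooth volume form $\Theta$ on $M$ with $\pi^*\om_N^n=|s_E|_h^{2(n-1)}\Theta$. Combined with the previous step and the compactness of $M$ (so that $e^{f/T}$ and $\om_0^n/\Theta$ are bounded), this yields $\om_T^n/\pi^*\om_N^n\le C_1|s_E|_h^{-2(n-1)}$ on $M'$ for a uniform constant $C_1$.

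Finally I would estimate the $L^p$ norm of $\tilde\om_T^n/\om_N^n$ by changing variables along the biholomorphism $\pi\colon M'\ra N'$, using $\pi^*\tilde\om_T=\om_T$ on $M'$ and that $\{y_0\}$ has measure zero:
\[\int_N\Big(\frac{\tilde\om_T^n}{\om_N^n}\Big)^p\om_N^n=\int_{M'}\Big(\frac{\om_T^n}{\pi^*\om_N^n}\Big)^p\pi^*\om_N^n\le C_1^p\int_M\frac{\Theta}{|s_E|_h^{2(n-1)(p-1)}}.\]
A local computation transverse to $E$ (where $|s_E|_h^2$ vanishes like the squared modulus of one holomorphic coordinate) shows the last integral is finite exactly when $(n-1)(p-1)<1$, i.e. $p<\tfrac{n}{n-1}$; since $n\ge2$ this interval contains values $p>1$ (for surfaces, any $p\in(1,2)$), which proves the first claim.

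For the continuity of $\psi_T$ I would argue that $\psi_T$ is a bounded $\om_N$-plurisubharmonic function, smooth on $N'$, and that its Monge--Amp\`ere measure $(\om_N+\sdd\psi_T)^n$ in the sense of Bedford--Taylor charges no mass on the pluripolar set $\{y_0\}$; hence it coincides with $F\,\om_N^n$ on all of $N$, where $F=\tilde\om_T^n/\om_N^n\in L^p(N)$ with $p>1$ from the first part (and, $\om_N$ being Gauduchon with $n=2$, $\int_NF\,\om_N^n=\int_N\om_N^n$). Continuity of $\psi_T$ then follows from the $L^\infty$ a priori estimate together with the stability/uniqueness theory for the complex Monge--Amp\`ere equation with $L^p$ right--hand side: Ko{\l}odziej's theorem in the K\"ahler case and its Hermitian analogues due to Dinew--Ko{\l}odziej and Ko{\l}odziej--Nguyen. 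I expect this last step --- importing the Hermitian $L^p$ regularity and uniqueness, and verifying the measure-theoretic identification of $(\om_N+\sdd\psi_T)^n$ across $y_0$ --- to be the main technical hurdle; the volume-form estimate occupying the first three paragraphs is routine once the blow-up canonical bundle formula is in hand.
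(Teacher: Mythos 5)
Your proposal is correct and follows essentially the same route as the paper: the paper's proof simply cites Lemma 5.2 of Song--Weinkove for the $L^p$ bound (which is exactly your argument via $\dot\vp\le C$, the blow-up canonical bundle formula $K_M=\pi^*K_N+(n-1)E$, and the change of variables giving integrability for $p<\tfrac{n}{n-1}$) and the Hermitian pluripotential results of Ko{\l}odziej--Nguyen and Dinew--Ko{\l}odziej for continuity. Your extra remark that the Bedford--Taylor measure of the bounded $\om_N$-psh function $\psi_T$ puts no mass on $\{y_0\}$ is the correct justification for identifying $(\om_N+\sdd\psi_T)^n$ with $F\,\om_N^n$ across the point, a step the paper leaves implicit.
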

 \begin{proof}
The argument of Lemma 5.2 in ~\cite{SW1} shows that $\tilde{\om}_T^n/\om_N^n\in L^p(N)$. The continuity of $\psi_T$ then
 follows from the results in ~\cite{KN} and Dinew-Ko{\l}odziej ~\cite{DK}.
\end{proof}

Given a smooth volume form $\Omega_N$, now we will construct a family of functions $\psi_{T, \epsilon}$ on $N$ which
 converge to $\psi_T$ in $L^{\infty}(N)$. For sufficiently small $\ep>0$ and $K$ large enough, define 
$$\Omega_\ep=(\pi|_{M\setminus E}^{-1})^*(\frac{|s|^{2K}_h\om^n(T-\epsilon)}{\ep+|s|^{2K}_h})+\ep\Omega_N \ \  \text{on}\ N\setminus\{y_0\}.$$
and $\Om_\ep|_{y_0}=\ep\Om_N|_{y_0}$. Here $s$ is a holomorphic section of the line bundle $[E]$ vanishing along the
 exceptional curve $E$ to order 1. Choose $h$ to be a smooth Hermitian metric on $[E]$ as in ~\cite{TW4} with curvature $R_h=-\sdd\log |s|_h^2$ such that for sufficiently small $\ep>0$,
$\pi^*\om_N-\ep R_h>0.$( see ~\cite{GH} for an argument of this.) Then the volume form $\Om_\ep\in C^k(N)$ for a fixed 
constant $k$ as $\pi$ is the blow down map and $K$ can be chosen to be sufficiently large. Moreover, $\Om_\ep$ converges
 to $\tilde{\om}_T^n$ in $C^{\infty}$ on compact subsets of $N\setminus\{y_0\}$ as $\ep$ goes to zero. By the result of Tosatti 
and Weinkove ~\cite{TW2}, there exist functions $\psi_{T,\ep}\in C^k(N)\cap C^{\infty}(N\setminus\{y_0\})$ such 
that $$(\om_N+\sdd\psi_{T,\ep})^n=C_\ep\Om_\ep$$ where the constants $C_\ep$ are chosen so that the integrals
 of both sides of the above equation match.  Write $F_\ep=C_\ep\Om_\ep/\Om_N$ and $F=\tilde{\om}_T^n/\Om_N$. 
Note that $C_\ep\ra 1$ as $\ep\ra 0$, then by the definition of $\Om_\ep$ we have $$\lim_{\ep\ra 0}||F_\ep-F||_{L^1(N)}=0.$$ By Lemma 2.1 and Kolodziej's stability result \cite{KN}, we have
\begin{align} \label{2.1}
\lim_{\ep\ra 0}||\psi_{T,\ep}- \psi_T||_{L^{\infty}(N)}=0.\end{align}
Let $\chi=\sdd\log\Omega_N$, then there exists $T'>T$ such that $$\hat{\om}_{t, N}=\om_N+(t-T)\chi$$ are smooth
 Gauduchon metrics for $t\in[T, T']$. For convenience, we will still write $\hat{\om}_t$ for $\hat{\om}_{t,N}$. Consider the parabolic complex Monge-Amp\`{e}re equations
$$\frac{\partial{\varphi_\ep}}{\partial{t}}=\log\frac{(\hat{\omega}_t+\sqrt{-1}\partial\bar{\partial}\varphi_\ep)^n}{\Omega_N},  \ \  \varphi_\ep|_{t=T}=\psi_{T,\ep}$$ on $[T, T']$. Then 
Theorem 1.3 and Lemma 2.1 give the following proposition.
\begin{prop}
There exists $\vp\in C^0(([T, T']\times N)\cap C^{\infty}((T, T']\times N)))$ such that \\
\item[]\ \ \ (i) $\vp_{\ep}\ra \vp $ in $L^{\infty}([T, T']\times N)$.\\
\item[]\ \ \ (ii) $\om(t)=\hat{\om}_t+\sdd \vp$ is the unique solution to the equation,\\[-.5cm]
\[\om|_{t=T}=\tl{\om}_T,\ \ \ \p\om=-\Ric(\om), \ \ for\  t\in(T, T']. \]
\end{prop}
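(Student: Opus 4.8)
The plan is to read off the limiting potential $\vp$ directly from Theorem 1.3 applied on $N$, and then to upgrade this to the uniform convergence $\vp_\ep\to\vp$ by a parabolic maximum‑principle comparison that exploits the $L^\infty$‑convergence $(\ref{2.1})$ of the initial data $\psi_{T,\ep}$.

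First I would check that Theorem 1.3 applies on $N$. As $N$ is a compact complex surface and $\om_N$ is Gauduchon, the mass condition $(\ref{1.3})$ holds on $(N,\om_N)$; by Lemma 2.1 the function $\psi_T$ is bounded (in fact continuous) with $\tl\om_T^n/\om_N^n\in L^p(N)$ for some $p>1$, so $\psi_T\in PSH_p(\om_N,\Om_N)$ and $\tl\om_T=\om_N+\sdd\psi_T\ge 0$ is an admissible singular initial metric. Applying Theorem 1.3 with $(\om_N,\Om_N,\sdd\log\Om_N)$ in the roles of $(\om_0,\Om,\chi)$ and the time variable shifted by $T$ (so that its $\hat\om_t$ is our $\hat\om_{t,N}$) produces $\vp\in C^0([T,T_N)\times N)\cap C^{\infty}((T,T_N)\times N)$ with $\vp(t)\to\psi_T$ in $L^\infty(N)$ as $t\to T^+$, solving $\dot\vp=\log\big((\hat\om_t+\sdd\vp)^n/\Om_N\big)$, and such that $\om(t)=\hat\om_t+\sdd\vp$ is the unique solution of $\p\om=-\Ric(\om)$ with $\om|_{t=T}=\tl\om_T$. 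Restricting to $[T,T']$, where we may and do take $T<T'<T_N$, gives assertion (ii).

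For (i) I would compare $\vp_\ep$ with this $\vp$ via the maximum principle for the parabolic complex Monge-Amp\`{e}re equation. For fixed $\ep$ the metric $\om_N+\sdd\psi_{T,\ep}$ is positive and of class $C^{k-2}$ (from the Tosatti--Weinkove solution of the Monge-Amp\`{e}re equation on $N$), so short-time existence for the Chern-Ricci flow together with parabolic smoothing (see \cite{TW4}) gives $\vp_\ep\in C^0([T,T']\times N)\cap C^{\infty}((T,T']\times N)$ attaining $\vp_\ep(T)=\psi_{T,\ep}$ continuously. Put $w=\vp_\ep-\vp$; on $(T,T']$ one has $\dot w=\log\big((\hat\om_t+\sdd\vp_\ep)^n/(\hat\om_t+\sdd\vp)^n\big)$, and at a point realizing the spatial maximum of $w(\cdot,t)$ the inequality $\sdd w\le 0$ gives $\hat\om_t+\sdd\vp_\ep\le\hat\om_t+\sdd\vp$ as positive $(1,1)$‑forms, hence $\dot w\le 0$ there. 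A standard argument then shows $t\mapsto\max_N w(t)$ is non-increasing on $(T,T']$, and symmetrically $t\mapsto\min_N w(t)$ is non-decreasing. Letting $t\to T^+$ and using the continuity of $w$ up to $t=T$ yields
\[\sup_{[T,T']\times N}|\vp_\ep-\vp|\ \le\ \|\psi_{T,\ep}-\psi_T\|_{L^\infty(N)},\]
which tends to $0$ by $(\ref{2.1})$; this is precisely $\vp_\ep\to\vp$ in $L^\infty([T,T']\times N)$, proving (i).

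The step I expect to demand the most care is the behaviour at the initial time $t=T$: since the limit potential $\vp$ is only continuous, not smooth, up to $t=T$, the monotonicity of $\max_N w$ and $\min_N w$ can only be run on the open interval $(T,T']$ and must be carried to the closed interval by continuity, and one must know that each $\vp_\ep$ genuinely attains $\psi_{T,\ep}$ in a continuous fashion — which is where the $C^k$‑regularity of $\psi_{T,\ep}$, built into the construction of $\Om_\ep$ through the choice of $K$ and the fact that $\pi$ is a blow-down, is used. The remaining steps are either direct appeals to Theorem 1.3 and Lemma 2.1 or routine applications of the parabolic maximum principle.
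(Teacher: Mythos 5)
Your proposal is correct and follows essentially the same route as the paper, which disposes of this proposition in one line by invoking Theorem 1.3 (applied on $N$ with $\om_N$, $\Om_N$ and initial potential $\psi_T$, using Lemma 2.1 and the fact that condition (1.3) holds for Gauduchon metrics on surfaces) together with the convergence (\ref{2.1}). Your explicit maximum-principle comparison giving $\sup|\vp_\ep-\vp|\le\|\psi_{T,\ep}-\psi_T\|_{L^\infty(N)}$ is precisely the stability argument implicit in the paper (it is the same mechanism as Lemma 5.1 in the proof of Theorem 1.3), so the two arguments coincide in substance.
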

To prove smooth convergence of $\om(t)$  on compact subsets of $N\setminus\{y_0\}$ as $T\ra T^+$, we need 
uniform estimates for $\om_{\ep}(t)=\hat{\om}_t+\sdd\vp_{\ep}$ on $[T, T']\times N\setminus \{y_0\}$, independent of $\ep$. 
To obtain these estimates, we need the bounds at time $T$, i.e. the bounds for $\tilde{\om}_{T, \ep}=\om_N+\sdd\psi_{T, \ep}.$ We will prove these in section 4.

\section{Higher order estimates as $t\ra T^-$}
In this section, we will prove a third order estimate and a bound for $|\Ric|$, which will be used to obtain the bounds for $\tilde{\om}_{T, \ep}$ in the next section.

As $t\ra T^-$, the smooth convergence of $\om(t)$ on compact subsets of $M\setminus\{E\}$ follows from Theorem 1.1 of ~\cite{TW5}. In 
particular, $C^{\infty}$ a priori estimates for $\om(t)$ on compact subsets away from the exceptional curves have already been obtained. 
However, we need more precise higher order global estimates on $M$ as $t\ra T^-$ to obtain the bounds for $\tilde{\om}_{T, \ep}$.

By Lemma 2.3 and 2.4 in ~\cite{TW5}, there exist positive constants $C$ and $K$ such that 
\begin{align} \label{e1}
\frac{|s|^{2K}_h}{C}\om_0\leq \om(t)\le \frac{C}{|s|^{2K}_h}\om_0.
\end{align}We may assume that $|s|_h^2\le 1$ on $M$ for convenience. By Lemma 2.1 of Guan-Li \cite{GL}, we can choose local coordinates around a point such that at this point 
\begin{align} \label{coor}
(g_0)_{i\bar{j}}=\delta_{ij},\  \ \ \pa_i(g_0)_{j\bar{j}}=0
\end{align}
for all $i, j$ and ($g_{i\bar{j}}$) is diagonal. Now choose such a coordinate system around a point. Following an argument in ~\cite{TW1}, we can get the inequality,
\begin{align} \label{e2}
\frac{|\nabla \tr_{\omega_0}\omega|_g^2}{\tr_{\omega_0}\omega} \leq\sum_{i,j,k} g^{i\bar{i}}g^{j\bar{j}}\partial_k g_{i\bar{j}}\partial_{\bar{k}}g_{j\bar{i}}+\sum_{i, j}g^{i\bar{i}}g^{i\bar{i}}|\partial_j (g_0)_{i\bar{j}}|^2.
\end{align} To see this, first applying the Cauchy-Schwarz inequality

\begin{align}
|\nabla \tr_{\om_0}\om|_g^2&=
\sum_i g^{i\bar{i}}(\sum_j\partial_ig_{j\bar{j}})(\sum_k\partial_{\bar{i}}g_{k\bar{k}})  \notag \\
&=\sum_ig^{i\bar{i}}|\sum_j\partial_ig_{j\bar{j}}|^2 \notag \\
&\leq \sum_ig^{i\bar{i}}(\sum_jg_{j\bar{j}})(\sum_jg^{j\bar{j}}|\partial_ig_{j\bar{j}}|^2) \notag \\ 
&=(\tr_{\om_0}\om)\left(\sum_{i,j}g^{i\bar{i}}g^{j\bar{j}}\partial_ig_{j\bar{j}}\partial_{\bar{i}}g_{j\bar{j}}\right).  \label{e3}
\end{align}

Note that $\om=\om_0+\theta(t)$ for a closed (1,1) form $\theta(t)$. 
Hence $$\partial_ig_{j\bar{j}}-\partial_jg_{i\bar{j}}=\partial_i(g_0)_{j\bar{j}}-\partial_j(g_0)_{i\bar{j}}.$$
Using (\ref{coor}), we get $$\partial_ig_{j\bar{j}}=\partial_jg_{i\bar{j}}-\partial_j(g_0)_{i\bar{j}}.$$ 
Similarly $\partial_{\bar{i}}g_{j\bar{j}}=\partial_{\bar{j}}g_{j\bar{i}}-\partial_{\bar{j}}(g_0)_{j\bar{i}}$. 
Then (\ref{e3}) gives\\
\begin{align}
\frac{|\nabla \tr_{\omega_0}\omega|^2}{\tr_{\omega_0}\omega}\leq\ 
&  \sum_{i,j} g^{i\bar{i}}g^{j\bar{j}}\partial_j g_{i\bar{j}}\partial_{\bar{j}}g_{j\bar{i}}-2\re \left (\sum_{i, j} g^{i\bar{i}}g^{j\bar{j}}\partial_j g_{i\bar{j}}\partial_{\bar{j}}(g_0)_{j\bar{i}}\right )\notag\\
\ & +\sum_{i,j} g^{i\bar{i}}g^{j\bar{j}}\partial_j (g_0)_{i\bar{j}}\partial_{\bar{j}}(g_0)_{j\bar{i}}\notag \\ =\ & \sum_{i,j} g^{i\bar{i}}g^{j\bar{j}}\partial_j g_{i\bar{j}}\partial_{\bar{j}}g_{j\bar{i}}-2\re \left (\sum_{i, j} g^{i\bar{i}}g^{j\bar{j}}\partial_i g_{j\bar{j}}\partial_{\bar{j}}(g_0)_{j\bar{i}}\right )\notag\\
\ & -\sum_{i,j} g^{i\bar{i}}g^{j\bar{j}}\partial_j (g_0)_{i\bar{j}}\partial_{\bar{j}}(g_0)_{j\bar{i}}\notag \\
\leq \ & \sum_{i,j} g^{i\bar{i}}g^{j\bar{j}}\partial_j g_{i\bar{j}}\partial_{\bar{j}}g_{j\bar{i}}-2\re \left (\sum_{i\neq j} g^{i\bar{i}}g^{j\bar{j}}\partial_i g_{j\bar{j}}\partial_{\bar{j}}(g_0)_{j\bar{i}}\right )\notag \\[10pt]
\leq \ & \sum_{i,j} g^{i\bar{i}}g^{j\bar{j}}\partial_j g_{i\bar{j}}\partial_{\bar{j}}g_{j\bar{i}}+\sum_{i\neq j} g^{j\bar{j}}g^{j\bar{j}}|\partial_i g_{j\bar{j}}|^2+\sum_{i\neq j}g^{i\bar{i}}g^{i\bar{i}}|\partial_j( g_0)_{i\bar{j}}|^2\notag \\[10pt]
 \leq \ &\sum_{i,j,k}g^{i\bar{i}}g^{j\bar{j}}\partial_k g_{i\bar{j}}\partial_{\bar{k}}g_{j\bar{i}}+\sum_{i, j}g^{i\bar{i}}g^{i\bar{i}}|\partial_j (g_0)_{i\bar{j}}|^2. \label{e4}
\end{align}
Thus we get the inequality (\ref{e2}). We will use it to prove our third order estimate. \\

Let ${(\Gamma_0)}_{ij}^k$ be the Christoffel symbols associated to $g_0$. It is convenient to compute using $\Phi_{ij}^{\ \ k}=\Gamma_{ij}^k-{(\Gamma_0)}_{ij}^k$ as in ~\cite{PSSW} (see also ~\cite{ShW}) . Denote $|\cdot|$ the norm with respect to the metric $g$. Consider $$S= |\nabla_{g_0}g|^2=g^{i\bar{p}}g^{j\bar{q}}g_{k\bar{r}}\Phi_{ij}^{\ \ k}\Phi_{\bar{p}\bar{q}}^{\ \ \bar{r}}.$$

\begin{prop}
There exist positive constants $\lambda$ and $C$ such that for $t\in [0, T)$, 
$$S\leq \frac{C}{|s|_h^{2\lambda}}.$$
\end{prop}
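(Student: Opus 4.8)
The statement is a Calabi-type third-order ($C^{3}$) estimate for the Chern--Ricci flow, and because the metrics $\om(t)$ degenerate along $E$ (with $\tfrac1C|s|_h^{2K}\om_0\le\om(t)\le C|s|_h^{-2K}\om_0$ by \eqref{e1}) it cannot be proved by a bare maximum principle: $S$ is only expected to blow up along $E$ like a negative power of $|s|_h$, and one proves exactly this by running the maximum principle on a version of $S$ weighted by $|s|_h^{2\la}$. Concretely, I would set $Q=\log(1+S)+\la\log|s|_h^{2}+\Psi$, where $\la>0$ is a large exponent to be determined and $\Psi$ is a bounded/lower-order correction built from $\vp$ (using $|\vp|\le C$, $\dot\vp\le C$ from Lemma 3.3 of \cite{TW5}) and possibly from $\tr_{\om_0}\om$. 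Since the flow is smooth on $[0,T)\times M$ by \cite{TW4} and $\la\log|s|_h^{2}\to-\infty$ along $E$, for each $t<T$ the spatial supremum of $Q(\cdot,t)$ is attained in $M'=M\setminus E$; one then applies the parabolic maximum principle on $[0,t_1]\times M$ for $t_1<T$ and checks the bound is independent of $t_1$.

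The analytic input is the Chern--Ricci version of the Calabi computation. Writing $\Phi_{ij}^{\ \ k}=\Gamma_{ij}^{k}-(\Gamma_0)_{ij}^{k}$ and following \cite{PSSW, ShW} together with the Hermitian modifications of \cite{TW1, TW4}, I expect an evolution inequality of the schematic form
\[
\ptd S\ \le\ -c_0\bigl(|\nabla\Phi|^{2}+|\overline\nabla\Phi|^{2}\bigr)+C\,(1+\tr_\om\om_0)(1+S),
\]
in which the torsion of $g_0$ and the various $g^{-1}$-factors contribute only terms absorbable into the right-hand side; here \eqref{e2} is precisely what is needed to dominate the troublesome first-order terms involving $\nabla\tr_{\om_0}\om$ by $S$. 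Passing to $\log(1+S)$ produces the extra term $|\nabla S|^{2}/(1+S)^{2}$, which is controlled by the good second-order term via the elementary bound $|\nabla S|^{2}\le C(1+S)\bigl(|\nabla\Phi|^{2}+|\overline\nabla\Phi|^{2}\bigr)+C(1+S)$. One also records $\ptd\log|s|_h^{2}=\tr_\om R_h$ with $|\tr_\om R_h|\le C\tr_\om\om_0$, and $\ptd\vp=\dot\vp-n+\tr_\om\hat\om_t$.

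With these in hand, the plan is to compute $\ptd Q$, use \eqref{e1} to bound every sign-indefinite term by $C|s|_h^{-2K}$ (via $\tr_\om\om_0,\ \tr_{\om_0}\om\le C|s|_h^{-2K}$), and evaluate at an interior space-time maximum of $Q$. There the conditions $\nabla Q=0$ and $\ptd Q\ge0$, combined with the good negative term $-c_0(|\nabla\Phi|^{2}+|\overline\nabla\Phi|^{2})/(1+S)$ and with the elementary bound $|\nabla|s|_h^{2}|_{g_0}^{2}\le C|s|_h^{2}$, should force a pointwise inequality $|s|_h^{2\mu}(1+S)\le C$ for some $\mu=\mu(K,n)$ at the maximum point; since $Q$ is maximized there, the same inequality propagates to all of $[0,T)\times M$, which is the Proposition with $\la:=\mu$ (or any larger value).

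I expect the main obstacle to be twofold. First, and most serious, the competition of powers of $|s|_h$: the coefficient $\tr_\om\om_0$ multiplying $1+S$ genuinely blows up like $|s|_h^{-2K}$, whereas the only negativity one can inject near $E$ — for instance through a term $-A\vp$ in $\Psi$, whose Laplacian contributes $-A\tr_\om\hat\om_t$ — is itself \emph{weak} near $E$, since $\hat\om_t=(1-\tfrac tT)\om_0+\tfrac tT\pi^{*}\om_N$ collapses transversally to $E$ and only obeys a uniform lower bound of the shape $\hat\om_t\ge c|s|_h^{2}\om_0$ on $[0,T]$. Choosing the weight exponent $\la$ and the precise form of $\Psi$ so that the good term survives this mismatch is a quantitative balancing that I would expect to be where the trick of Phong--Sturm \cite{PS} enters. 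Second, there is the lengthy but by now routine bookkeeping of the Calabi computation over a Hermitian rather than K\"{a}hler background: one must track all torsion contributions of $g_0$ and verify each is lower order relative to $S$ and absorbable into $C(1+\tr_\om\om_0)(1+S)$, and likewise check the analogous inequality for $\tr_{\om_0}\om$ in which \eqref{e2} is again used.
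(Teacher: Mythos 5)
Your overall framework --- a weighted Calabi quantity handled by the maximum principle, using (\ref{e1}), (\ref{e2}) and the Sherman--Weinkove evolution inequalities, with the weight forcing the supremum away from $E$ --- is the same as the paper's, and you correctly sense that the Phong--Sturm trick is where the crux lies. But the two concrete analytic claims your barrier $Q=\log(1+S)+\la\log|s|_h^{2}+\Psi$ rests on do not hold. First, the evolution inequality for $S$ is not linear in $S$: what \cite{ShW} together with (\ref{e1}) gives is (\ref{e6}), namely $(\frac{\partial}{\partial t}-\Delta)S\le C|s|_h^{-\alpha_1}(1+S^{3/2})-\frac12|\overline{\nabla}\Phi|^{2}$, i.e.\ a \emph{superlinear} bad term $S^{3/2}$ with a degenerate coefficient, and only the $\overline{\nabla}\Phi$ half of the gradient as a good term; your schematic $C(1+\tr_{\om}\om_0)(1+S)$ omits exactly the term that makes the estimate delicate. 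Second, the log-gradient term cannot be absorbed as you claim: from $|\nabla S|^{2}\le 2S(|\nabla\Phi|^{2}+|\overline{\nabla}\Phi|^{2})$ one only gets $|\nabla S|^{2}/(1+S)^{2}\le 2(|\nabla\Phi|^{2}+|\overline{\nabla}\Phi|^{2})/(1+S)$, which exceeds the available $\tfrac{1}{2}|\overline{\nabla}\Phi|^{2}/(1+S)$ by a factor and involves $|\nabla\Phi|^{2}$, which (\ref{e6}) does not supply. So $\log(1+S)$ has none of the self-improving structure that $\log\tr_{\om_0}\om$ enjoys (that is what (\ref{e2}) is for), and your maximum-principle computation does not close at this step.

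More importantly, your test function contains no mechanism producing a good term superlinear in $S$, which is what is needed against $S^{3/2}$ (and against the $S^{2}$-type remainders that reappear once the cross terms are estimated through $\overline{\nabla}\Phi$). This is precisely what the paper's choice $H=\frac{S}{(|s|_h^{-\alpha}-\tr_{\om_0}\om)^{2}}+|s|_h^{\beta}\tr_{\om_0}\om-At$ delivers: the Phong--Sturm quotient, combined with the term $-\frac{1}{C}|s|_h^{\alpha_2}S$ in (\ref{e7}), generates $-\frac{1}{C_0}|s|_h^{3\alpha+\alpha_2}S^{2}$, which absorbs both $C|s|_h^{2\alpha-\alpha_1}S^{3/2}$ and the $S^{2}$ piece of the cross term $4\re\,\nabla S\cdot\overline{\nabla}(|s|_h^{-\alpha}-\tr_{\om_0}\om)/(\cdot)^{3}$, while the remaining $\frac12|s|_h^{2\alpha}|\overline{\nabla}\Phi|^{2}$ from that cross term is cancelled exactly by the good term of (\ref{e6}). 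The linear-in-$S$ negativity comes from the second summand $|s|_h^{\beta}\tr_{\om_0}\om$, which via (\ref{e7}) and (\ref{e2}) yields $-\frac{1}{C'}|s|_h^{2\beta}S$; it does \emph{not} come from a term like $-A\vp$, whose Laplacian only produces $-A\tr_{\om}\hat{\om}_t$ and gives no control on $S$ at all, so the balancing of weights you anticipate cannot be carried out with the $\Psi$ you describe. Unless you build a comparable weighted $-S^{2}$ term into $Q$ (e.g.\ by replacing $\log(1+S)$ with the quotient above) and a weighted trace term supplying $-c|s|_h^{2\beta}S$, the proposal as written has a genuine gap rather than being an alternative proof.
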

\begin{proof}
Let $$H=\frac{S}{(|s|_h^{-\alpha}-\tr_{\omega_0}\omega)^2}+|s|_h^{\beta}\tr_{\omega_0}\omega -At,$$ where $\alpha$ and $\beta$ are constants to be determined and at least large enough such that
$$\frac{1}{2}|s|_h^{-\alpha}<|s|_h^{-\alpha}-\tr_{\omega_0}\omega<|s|_h^{-\alpha}$$ and $$|\nabla |s|_h^{\beta}|\leq C |s|_h^{\frac{3}{4}\beta},\ \ \ \ |\Delta |s|_h^{\beta}|\leq C |s|_h^{\frac{\beta}{2}}.$$
Computing the evolution of $H$, 
 \begingroup
\addtolength{\jot}{.8em}
\begin{align}
 \label{e5}
\begin{split}(\frac{\partial}{\partial t}-\Delta)H
=&\frac{1}{(|s|_h^{-\alpha}-\tr_{\omega_0}\omega)^2}(\frac{\partial}{\partial t}-\Delta)S\\
&-\frac{2S}{(|s|_h^{-\alpha}-\tr_{\omega_0}\omega)^3}(\frac{\partial}{\partial t}-\Delta)(|s|_h^{-\alpha}-\tr_{\omega_0}\omega)
\\&+\frac{4\re \nabla S \cdot \overline{\nabla} (|s|_h^{-\alpha}-\tr_{\omega_0}\omega)}{(|s|_h^{-\alpha}-\tr_{\omega_0}\omega)^3}-\frac{ 6S |\nabla (|s|_h^{-\alpha}-\tr_{\omega_0}\omega)|^2}{(|s|_h^{-\alpha}-\tr_{\omega_0}\omega)^4}-A\\
&+(\frac{\partial}{\partial t}-\Delta)(|s|_h^{\beta}\tr_{\omega_0}\omega).
\end{split}\end{align}
\endgroup
From \cite{ShW} and (\ref{e1}), we have the estimates
\begin{align}  \label{e6}
(\frac{\partial}{\partial t}-\Delta)S\leq C|s|_h^{-\alpha_1}(1+S^{3/2})-\frac{1}{2}|\overline{\nabla}\Phi|^2,
\end{align}
and \begin{align}  \label{e7}
(\frac{\partial}{\partial t}-\Delta)\tr_{\omega_0}\omega\leq -\frac{|s|_h^{\alpha_2}S}{C}+C|s|_h^{-\alpha_2}-\frac{1}{2}\sum_{i,j,k}g^{j\bar{j}}g^{i\bar{i}}\partial_k g_{i\bar{j}}\partial_{\bar{k}}g_{j\bar{i}}.
\end{align}
Also,  by (\ref{e2})
$$\frac{|\nabla \tr_{\omega_0}\omega|^2}{\tr_{\omega_0}\omega} \leq \sum_{i,j,k}g^{j\bar{j}}g^{i\bar{i}}\partial_k g_{i\bar{j}}\partial_{\bar{k}}g_{j\bar{i}}+C|s|_h^{-\alpha_3}.$$
Compute
\begin{align*}
&\ptd(|s|_h^{\beta}\tr_{\omega_0}\omega) \\[6pt]
&=|s|_h^{\beta}\ptd\tr_{\omega_0}\omega-(\tr_{\omega_0}\omega)\Delta |s|_h^{\beta}-2\re(\nabla |s|_h^{\beta}\cdot \overline{\nabla}\tr_{\omega_0}\omega) \\[6pt]
&\leq -\frac{1}{C'}|s|_h^{2\beta}S+C'.
\end{align*}
In the last inequality we use
 \begingroup
\addtolength{\jot}{.7em}
\begin{align*}
2|\re(\nabla |s|_h^{\beta}\cdot \overline{\nabla}\tr_{\omega_0}\omega)|&\leq C+\frac{1}{C}|\nabla |s|_h^{\beta}|^2|\nabla \tr_{\omega_0}\omega |^2\\
&\leq C+\frac{1}{C}|\nabla |s|_h^{\beta}|^2(\tr_{\omega_0}\omega)(\sum_{i,j,k}g^{i\bar{i}}g^{j\bar{j}}\partial_k g_{i\bar{j}}\partial_{\bar{k}}g_{j\bar{i}}+C|s|_h^{-\alpha_3})\\
&\leq C+\frac{1}{2}|s|_h^{\beta}\sum_{i,j,k}g^{i\bar{i}}g^{j\bar{j}}\partial_k g_{i\bar{j}}\partial_{\bar{k}}g_{j\bar{i}}.
\end{align*} for $\beta$ large enough. Then fix $\beta$.
\endgroup
Together with (\ref{e6}) and (\ref{e7})we get
 \begingroup
\addtolength{\jot}{.7em}
\begin{align*}
\ptd H&\leq C|s|_h^{2\alpha-\alpha_1}(1+S^{3/2})-\frac{1}{2}|s|_h^{2\alpha}|\overline{\nabla}\Phi|^2\\
&+C_0|s|_h^{3\alpha}|\Delta|s|_h^{-\alpha}|S+C_0|s|_h^{3\alpha-\alpha_2}S-|s|_h^{3\alpha+\alpha_2}\frac{S^2}{C_0}\\
&+4\re \frac{\nabla S\cdot \overline{\nabla}(|s|_h^{-\alpha}-\tr_{\omega_0}\omega)}{(|s|_h^{-\alpha}-\tr_{\omega_0}\omega)^3}+(-\frac{1}{C'}|s|_h^{2\beta}S+C')-A.
\end{align*}
\endgroup
As $$|\overline{\nabla}S|\leq 2S^{1/2}|\overline{\nabla}\Phi|,\ \ \ |\nabla \tr_{\omega_0}\omega|\leq C |s|_h^{-\alpha_4}S^{1/2}$$ and $|\nabla |s|_h^{-\alpha}|\leq C|s|_h^{-\alpha-\alpha_5},$ we have
\begin{align*}
\left |4\re \frac{\nabla S\cdot \overline{\nabla}(|s|_h^{-\alpha}-\tr_{\omega_0}\omega)}{(|s|_h^{-\alpha}-\tr_{\omega_0}\omega)^3} \right |&\leq CS^{1/2}|\overline{\nabla}\Phi|(|s|_h^{2\alpha-\alpha_5}+|s|_h^{3\alpha-\alpha_4}S^{1/2})\\[4pt]
&\leq \frac{1}{2}|s|_h^{2\alpha}|\overline{\nabla}\Phi|^2+C_1(|s|_h^{2\alpha-2\alpha_5}S+|s|_h^{4\alpha-2\alpha_4}S^{2}).
\end{align*}
Also $$C|s|_h^{2\alpha-\alpha_1}S^{3/2}\leq |s|_h^{3\alpha+\alpha_2}\frac{S^{2}}{2C_0}+C_1|s|_h^{\alpha-2\alpha_1-\alpha_2}S.$$
By choosing $\alpha$ and then $A$ sufficiently large, we have
$$\ptd H\leq 0.$$
Thus $H$ has a uniform upper bound and we obtain the desired estimate for $S$.
\end{proof}

\begin{prop}
There exist positive constants $\lambda$ and $C$ such that for $t\in [0, T)$, 
$$|\Ric|\leq \frac{C}{|s|_h^{2\lambda}}.$$
\end{prop}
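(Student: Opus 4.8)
The plan is to convert the estimate into a fourth-order (Calabi-type) bound on $\vp$ and then prove it by a maximum-principle argument modelled on the proof of Proposition 3.1. First I reduce $|\Ric|$ to a Hessian of $\dot\vp$. Applying $\sdd$ to the Monge--Amp\`ere equation (2.1) and using $\Omega=e^{f/T}\om_0^n$ together with $\Ric(\om)=-\sdd\log\det g$ gives
\[\Ric(\om)=\Ric(\om_0)-\frac1T\sdd f-\sdd\dot\vp .\]
Since $\Ric(\om_0)$ and $\sdd f$ are fixed smooth forms, (\ref{e1}) bounds their $g$-norms by $C|s|_h^{-2\la}$, so it is enough to bound $|\sdd\dot\vp|_g$. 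Equivalently, because $|\Ric|_g\le C|\Rm|_g$ and $\Rm=\Rm_0-\bar\na\Phi$ with $|\Rm_0|_g\le C|s|_h^{-2\la}$, it suffices to bound the fourth-order quantity $S_1:=|\bar\na\Phi|^2$ by $C|s|_h^{-2\la}$ (after enlarging $\la$); this sits exactly one derivative above the quantity $S$ estimated in Proposition 3.1, and its appearance is already visible in the good term $-\tfrac12|\bar\na\Phi|^2$ of the inequality (\ref{e6}).

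To bound $S_1$ I would mimic the barrier used for $S$. Consider
\[\tl H=\frac{S_1}{(|s|_h^{-\al}-\tr_{\om_0}\om)^2}+\mu S+|s|_h^{\beta}\tr_{\om_0}\om-At,\]
and show $\ptd\tl H\le 0$ for $\al,\beta,\mu$ (and then $A$) large. The ingredients: (a) the evolution inequality for $S_1$ along the Chern--Ricci flow, obtained by the same Bochner-type computation as in \cite{ShW}, \cite{TW1}, whose good term is $-(|\na\bar\na\Phi|^2+|\bar\na\bar\na\Phi|^2)$ and whose remaining torsion terms are of lower order in derivatives, hence controlled by $S$, $\tr_{\om_0}\om$ and (\ref{e1}); (b) the inequality (\ref{e6}) for $S$, whose good term $-\tfrac12|\bar\na\Phi|^2$, weighted by $\mu$, absorbs the products of $\bar\na\Phi$ with lower-order factors via Cauchy--Schwarz; (c) the inequality (\ref{e7}) for $\tr_{\om_0}\om$, whose $-S$ term and, since $\tr_{\om_0}\om$ sits in the denominator, the attendant $-S^2$-type term absorb the terms of degree $\ge 2$, exactly as the $C|s|_h^{2\al-\al_1}S^{3/2}$ term was split off in the proof of Proposition 3.1; (d) Proposition 3.1, (\ref{e1}) and $|\vp|+|\dot\vp|\le C$ to tame every negative power of $|s|_h$ that appears. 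From $\tl H\le C$ one reads off $S_1\le C|s|_h^{-2\la}$, and then the chain above yields $|\Ric|\le C|s|_h^{-2\la}$.

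The main obstacle is the genuinely top-order term. The evolution of $S_1$ contains a term of schematic form $\Rm*\bar\na\Phi*\bar\na\Phi$, and substituting $\Rm=\Rm_0-\bar\na\Phi$ produces a piece cubic in $\bar\na\Phi$ that the good term $-\mu|\bar\na\Phi|^2$ cannot by itself absorb; one must split it between the higher good term $-|\na\bar\na\Phi|^2$ and the $-S^2$ coming from the denominator, which forces the exponents of $|s|_h$ in all the Cauchy--Schwarz splittings to be chosen in the right order. This bookkeeping, rather than any single estimate, is the crux, exactly as in the (already delicate) proof of Proposition 3.1. The same difficulty arises if one instead runs a maximum principle directly on a weighted $|\Ric|^2$, where it is the $\Rm*\Ric*\Ric$ term; and at the borderline term the concavity trick of Phong--Sturm \cite{PS}, replacing $S_1$ by a suitable combination with $\dot\vp$ or $S$, is the natural remedy, as in the proof of Theorem 1.2.
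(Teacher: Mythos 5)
Your reduction of $|\Ric|$ to the fourth--order quantity $S_1=|\ov{\na}\Phi|^2$ is a strictly stronger goal than the proposition (it would bound all of $|\Rm|$, not just its trace), and your argument breaks down exactly at the point you yourself flag as the crux. In the evolution inequality for $S_1$ the term of schematic type $\Rm*\ov{\na}\Phi*\ov{\na}\Phi$ is comparable to $S_1^{3/2}$, i.e.\ cubic in curvature, and none of the good terms available in your barrier can absorb it: the higher good term $-|s|_h^{2\al}\bigl(|\na\ov{\na}\Phi|^2+|\ov{\na}\,\ov{\na}\Phi|^2\bigr)$ is one derivative order above $S_1$ and cannot dominate a cube of curvature pointwise; the denominator $(|s|_h^{-\al}-\tr_{\om_0}\om)^2$ contributes, via (\ref{e7}), at best a good term of the form $-c\,|s|_h^{3\al+\al_2}S_1S$, and $S$ does not control $S_1$, so no $-S_1^2$ term is ever produced (the ``$-S^2$'' you invoke only exists when the numerator is $S$, as in Proposition 3.1); and the $-\tfrac{\mu}{2}|\ov{\na}\Phi|^2=-\tfrac{\mu}{2}S_1$ term from (\ref{e6}) is merely linear in $S_1$ and cannot beat $S_1^{3/2}$. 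To run a Calabi-type curvature estimate one needs the $-|\ov{\na}\Phi|^2$ good term of the $S$-evolution to appear \emph{multiplied by} $S_1$, which forces $S$ itself (a factor of the form $K-S$, suitably weighted) into the denominator, as in the local curvature estimate of \cite{ShW}; your ansatz does not have this structure. The Phong--Sturm device you invoke at the end is the $1/(\tl{\psi}+C_0)$ term used in Section 4 to handle torsion cross terms in a second-order estimate; it has no bearing on absorbing a cubic curvature term.

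The paper avoids the difficulty altogether by never estimating the full curvature: it applies the maximum principle to $H=|s|_h^{3\al}|\Ric|+|s|_h^{4\beta}S-At$, with $|\Ric|$ entering to the \emph{first} power, after deriving the evolution inequality (\ref{e8}) from the evolution equation of the Chern--Ricci curvature. With this choice the only quadratic curvature term, $|s|_h^{-\al}|\Rm|^2$, is absorbed by the good term $-\tfrac12|s|_h^{4\beta}|\ov{\na}\Phi|^2$ coming from (\ref{e11}), using the pointwise identity $\na_{\bar{q}}\Phi_{ij}^{\ \ k}=-R_{i\bar{q}j}^{\ \ \ k}+(R_0)_{i\bar{q}j}^{\ \ \ k}$, which gives $|\Rm|^2\le\tfrac32|\ov{\na}\Phi|^2+|s|_h^{-2\al}$; the terms in $|\na\Ric|$ are handled by the good term $-|\na\Ric|^2/|\Ric|$ together with $\na H=0$ at the maximum point (as in (\ref{e13})), and the leftover linear term is tamed by $|\Ric|\le|\ov{\na}\Phi|+|s|_h^{-\al_1}$. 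So nothing beyond quadratic in curvature ever appears. If you wish to salvage your route you would have to prove a weighted analogue of the Sherman--Weinkove curvature estimate with $S$ in the denominator; as written, the absorption of the cubic term is a genuine gap.
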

\begin{proof}
First, we have the evolution equation
\begin{align*}
(\frac{\partial}{\partial t}-\Delta ) R_{j\bar{k}}=&\nabla_{\bar{k}}T^{\ r}_{lj}R^{\ l}_r+T^{\ r}_{lj}\nabla_{\bar{k}}R^{\ l}_r+R^{\ \ \ r}_{l\bar{k}j}R^{\ l}_r\\
&-R^{\ \ \bar{s}l}_{l\bar{k}}R_{j\bar{s}}+\nabla^{\bar{q}}T_{\bar{k}\bar{q}}^{\ \ \bar{s}}R_{j\bar{s}}+T_{\bar{k}\bar{q}}^{\ \ \bar{s}}\nabla^{\bar{q}}R_{j\bar{s}}
\end{align*}
Then it follows from Lemma 3.4 in \cite{N} and Proposition 3.1 that \begin{align}  \label{e8}
(\frac{\partial}{\partial t}-\Delta ) |\Ric|&=\frac{1}{2|\Ric|}(\frac{\partial}{\partial t}-\Delta ) |\Ric|^2+2|\nabla |\Ric||^2)\notag \\[3pt]
&\leq |s|_h^{-\al}(|\nabla \Ric|+|\Rm|^2+1)-\frac{|\nabla \Ric|^2}{|\Ric|}+\frac{|\nabla |\Ric||^2}{|\Ric|}.
\end{align}
for some constant $\al>0$.
Consider $$H= |s|_h^{3\al}|\Ric|+ |s|_h^{4\beta}S-At,$$ where $\al$ and $\beta$ are constants to be determined and are at least large enough such that 
\begin{align} \label{e9}
|\overline{\na}|s|_h^{3\al}|\leq C|s|_h^{2\al}, \ \ \ \ \  |\Delta|s|_h^{3\al}|\leq C|s|_h^{2\al}\end{align}and 
\begin{align} \label{e10}
|\overline{\na}|s|_h^{4\beta}|\leq C|s|_h^{3\beta}, \ \ \   \ \ |\Delta|s|_h^{4\beta}|\leq C|s|_h^{3\beta}.\end{align}
Assume that $H$ achieves maximum at a point $(t_0, z_0),\ t_0>0$ and $|\Ric|>1$ at $(t_0, \ z_0)$.
By (\ref{e6}) and the estimate for $S$,\begin{align} \label{e11}
(\frac{\partial}{\partial t}-\Delta)S\leq C|s|_h^{-\beta}-\frac{1}{2}|\overline{\na}\Phi|^2,
\end{align} for sufficiently large $\al$.
Together with (\ref{e8}),(\ref{e9}) and (\ref{e10}), we obtain
\begingroup
\addtolength{\jot}{.8em}
\begin{align}  \label{e12}
(\frac{\partial}{\partial t}-\Delta )H\leq & |s|_h^{2\alpha} (|\nabla \Ric|+|\Rm|^2)+|s|_h^{3\alpha}\left(\frac{|\nabla |\Ric||^2}{|\Ric|}-\frac{|\nabla \Ric|^2}{|\Ric|}\right)\nonumber\\
&-2\re\left(\na|\Ric|\cdot \overline{\na}|s|_h^{3\alpha}\right)-\Delta |s|_h^{3\alpha}|\Ric|-2\re\left(\na S\cdot \overline{\na}|s|_h^{4\beta}\right)\notag\\
&+C|s|_h^{3\beta}-\frac{1}{2}|s|_h^{4\beta}|\overline{\na}\Phi|^2-\De|s|_h^{4\beta} S-A.
\end{align}
\endgroup
As $\na H=0$ at $(t_0, z_0)$, we have 
\begin{align*}
|s|_h^{3\alpha}\overline{\na} |\Ric|=-\overline{\na}|s|_h^{3\alpha}|\Ric|-|s|_h^{4\beta}\overline{\na}S-\overline{\na}|s|_h^{4\beta}S
\end{align*}
Combining with (\ref{e9}) and (\ref{e10}) and $|\nabla |\Ric||\leq |\nabla\Ric|$, we get
\begingroup
\addtolength{\jot}{.8em}
\begin{align}  \label{e13}
\frac{|\nabla |\Ric||^2}{|s|_h^{-3\alpha}|\Ric|}&\leq C|s|_h^{2\al}|\na{\Ric}|+\frac{|\overline{\na}S||\na{\Ric}|}{|s|_h^{-4\beta}|\Ric|}+C\frac{S|\na{\Ric}|}{|s|_h^{-3\beta}|\Ric|}\notag\\
&\leq \frac{|\nabla \Ric|^2}{4|s|_h^{-3\alpha}|\Ric|}+C_1|s|_h^{\al}|\Ric|+C_2\frac{|s|_h^{7\beta}|\overline{\na}\Phi|^2}{|s|_h^{3\al}|\Ric|}\notag \\
&\ \ \ \ +C_3\frac{|s|_h^{4\beta}}{|s|_h^{3\alpha}|\Ric|},
\end{align}
\endgroup
where we use $|\overline{\na} S|\leq 2S|\overline{\na}\Phi|^2$ and $S\leq C|s|_h^{-\beta}$ in the second inequality. 
Note that $$|\Rm|^2\leq\frac{3}{2} |\overline{\nabla}\Phi|^2+|s|_h^{-2\alpha}$$ for $\alpha$ large as $$\nabla_{\bar{q}}\Phi_{ij}^{\ \ k}=-R_{i\bar{q}j}^{\ \ \  k}+(R_0)^{\ \ \ k}_{i\bar{q}j}.$$Then we have
\begingroup
\addtolength{\jot}{.8em}
\begin{align*}
\ptd H\leq&  C|s|_h^{\al}|\Ric|+C\frac{|s|_h^{7\beta}|\overline{\na}\Phi|^2}{|s|_h^{3\al}|\Ric|}+C\frac{|s|_h^{4\beta}}{|s|_h^{3\alpha}|\Ric|}\\
&+C|s|_h^{2\beta}|\overline{\na}\Phi|-\frac{1}{4}|s|_h^{4\beta}|\overline{\na}\Phi|^2+C-A.
\end{align*}
\endgroup
Assume at $(t_o, z_0)$, $|s|_h^{3\al}|\Ric|\geq 1$ (otherwise, $H\leq 2$ and the bound for $|\Ric|$ follows). By (\ref{e1}),  $$|\Ric|\leq|\overline{\na}\Phi| +|s|_h^{-\al_1}.$$
For $\al$ large enough, the term $|s|_h^{\al}|\Ric|$ can be controlled by $\frac{1}{6}|s|_h^{4\beta}|\overline{\na}\Phi|^2$. 
Choosing $\al$ and then $A$ sufficiently large, we get $$\ptd H\leq 0,$$ therefore we have the uniform upper bound for $H$ and the proposition follows.
\end{proof}

\section{Smooth convergence as $t\ra T^+$}

In this section, we will prove sharper bounds on $\vp_{\ep}$ and $\vp$ and then obtain the smooth convergence of the metrics $\om(t)$
 to $\tilde{\om}_T$ on compact subsets of $N\setminus\{y_0\}$ as $t\ra T^+$.

Recall that $\tl{\om}_{T,\ep}=\om_N+\sdd \psi_{T,\ep}.$ For simplicity, write $\tl{\om}=\tl{\om}_{T,\ep}$, $\hat{\om}=\hat{\om}_T=\om_N$ and
 $|s|^2_h$ for $(\pi|^{-1}_{M\setminus E})^*(|s|^2_h)$ in the proof of the following lemma. Then
$$\tl{\om}^n=(\hat{\om}+\sdd \psi_{T,\ep})^n=e^{F_{\ep}}\hat{\om}^n,\ \ \ \text{where}\ \  F_{\ep}=\log\lt( \frac{C_{\ep}\Om_{\ep}}{\hat{\om}^n}\rt).$$
\begin{lem}
There exist constants $\la>0$ and $C>0$, independent of $\ep$, such that
$$\frac{|s|_h^{2\la}}{C}\om_N\leq\tl{\om}_{T, \ep}\leq\frac{C}{|s|_h^{2\la}}\om_N$$ on $N\setminus \{y_0\}.$\end{lem}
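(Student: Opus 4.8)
The plan is to prove the elliptic, $\ep$--uniform counterpart of the weighted bounds $(\ref{e1})$ by running the Chern--Ricci second order (trace) estimate of~\cite{TW4,TW5} on the complex Monge--Amp\`ere equation $\tl{\om}_{T,\ep}^n=e^{F_\ep}\om_N^n$, $F_\ep=\log(C_\ep\Om_\ep/\om_N^n)$, carried out on $N\setminus\{y_0\}$ with the right hand side controlled uniformly in $\ep$ by the estimates of Section~3 at time $T-\ep$. Two structural facts will be used: $\psi_{T,\ep}$ is bounded uniformly in $\ep$ by $(\ref{2.1})$, and $\om_N-\ep R_h$ (with $R_h$ read on $N\setminus\{y_0\}$ via $\pi$) is positive there, being the push--forward of $\pi^*\om_N-\ep R_h>0$ on $M$; the latter supplies the convexity gain in the maximum principle.

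First I would control $F_\ep$ and its Chern--Ricci form. On $N\setminus\{y_0\}$ one has $C_\ep\Om_\ep=C_\ep\,\tfrac{|s|_h^{2K}}{\ep+|s|_h^{2K}}\,(\pi|^{-1}_{M\setminus E})^*\om^n(T-\ep)+C_\ep\,\ep\,\Om_N$, so using $|s|_h^2\le1$, $C_\ep\to1$ and $(\ref{e1})$ at $t=T-\ep$ (and $K$ large), $F_\ep$ is bounded above uniformly in $\ep$, and $F_\ep\ge-C|s|_h^{-2\mu}$ uniformly in $\ep$, the lower bound coming from the first summand alone. More importantly, the Chern--Ricci form of $\tl{\om}_{T,\ep}$, namely $\Ric(\om_N)-\sdd F_\ep$, is bounded by $C|s|_h^{-2\mu}\om_N$ uniformly in $\ep$: away from $y_0$ the only singular contribution is $-\sdd\log\om^n(T-\ep)=\Ric(\om(T-\ep))$, bounded by $C|s|_h^{-2\mu}\om_0$ by Proposition~3.2, while near $y_0$ the term $C_\ep\ep\Om_N$ dominates and $\sdd\log(\ep\Om_N/\om_N^n)$ is smooth; the first order quantities appearing in the transition region, as well as the uniform $C^k$--control of $\Om_\ep$, are handled by Proposition~3.1 together with $(\ref{e1})$.

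Granting these inputs, I would apply the maximum principle on $N\setminus\{y_0\}$ to
\[
H=\log\tr_{\om_N}\tl{\om}_{T,\ep}+\la\log|s|_h^2-A\bigl(\psi_{T,\ep}-\ep\log|s|_h^2\bigr),
\]
choosing $\la$ and then $A$ large. Since $\tr_{\om_N}\tl{\om}_{T,\ep}$ admits a crude (non--uniform in $\ep$) bound by a negative power of $|s|_h^2$ while $\la\log|s|_h^2+A\ep\log|s|_h^2\to-\infty$ near $y_0$, for $\la$ large the supremum of $H$ is attained at some $z_0\in N\setminus\{y_0\}$ (alternatively, work on $\{|s|_h^2\ge\delta\}$ and let $\delta\to0$). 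At $z_0$, the standard Chern--Ricci lower bound for $\De_{\tl{\om}_{T,\ep}}\log\tr_{\om_N}\tl{\om}_{T,\ep}$ combined with $-A\De_{\tl{\om}_{T,\ep}}\bigl(\psi_{T,\ep}-\ep\log|s|_h^2\bigr)=-An+A\tr_{\tl{\om}_{T,\ep}}(\om_N-\ep R_h)$, the weight $\la\log|s|_h^2$, and the curvature bound above bounds $\tr_{\tl{\om}_{T,\ep}}(\om_N-\ep R_h)$, hence --- via $\tl{\om}_{T,\ep}^n=e^{F_\ep}\om_N^n$ and $F_\ep\ge-C|s|_h^{-2\mu}$ --- also $\tr_{\om_N}\tl{\om}_{T,\ep}$, at $z_0$; unwinding $H(z_0)\ge H(z)$ then gives $\tr_{\om_N}\tl{\om}_{T,\ep}\le C|s|_h^{-2\la}$ on $N\setminus\{y_0\}$, uniformly in $\ep$ --- this is the argument of Lemmas~2.3--2.4 of~\cite{TW5}. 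The matching lower bound is algebraic: the smallest eigenvalue of $\tl{\om}_{T,\ep}$ with respect to $\om_N$ is at least $e^{F_\ep}(\tr_{\om_N}\tl{\om}_{T,\ep})^{-(n-1)}\ge C^{-1}|s|_h^{2\la'}$ by the two bounds just obtained. Enlarging the constant gives the stated two sided estimate.

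The main obstacle is uniformity in $\ep$. The Chern--Ricci form of $\tl{\om}_{T,\ep}$, and the metric itself, genuinely degenerate near $y_0$, so the weights in $H$ must be tuned so that near $y_0$ every error term --- including the Hermitian torsion terms of $\om_N$, which have no K\"ahler counterpart as in~\cite{SW1} --- is absorbed at the cost of allowing $\tr_{\om_N}\tl{\om}_{T,\ep}$ to blow up precisely like $|s|_h^{-2\la}$, while away from $y_0$ nothing degenerates; and one must verify that the required weighted bounds on the data --- on $\om^n(T-\ep)$ and, above all, on $\Ric(\om(T-\ep))$ near the exceptional curve --- are uniform in $\ep$, which is exactly what Propositions~3.1 and~3.2 provide.
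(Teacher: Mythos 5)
Your proposal is essentially the paper's own argument: the paper likewise bounds $\tl\om_{T,\ep}$ by an elliptic maximum principle applied to $H=\log\tr_{\om_N}\tl\om_{T,\ep}-A\tl\psi_{T,\ep}+1/(\tl\psi_{T,\ep}+C_0)$ with $\tl\psi_{T,\ep}=\psi_{T,\ep}-\ep_0\log|s|_h^2$, using the uniform bound (\ref{2.1}) on $\psi_{T,\ep}$, the positivity $\om_N-\ep_0R_h\ge c\,\om_N$, the inequality from Section 9 of \cite{TW4} with the error term $|\De_{\om_N}F_\ep|\le C|s|_h^{-2\beta}$ supplied by (\ref{e1}) and Propositions 3.1--3.2 applied at $t=T-\ep$, and then the determinant trick for the lower bound. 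Two small slips to fix in your write-up: the bound you actually need (and which (\ref{e1}) gives) is $e^{F_\ep}\ge c\,|s|_h^{2\mu}$ rather than $F_\ep\ge -C|s|_h^{-2\mu}$, while passing from $\tr_{\tl\om_{T,\ep}}\om_N$ to $\tr_{\om_N}\tl\om_{T,\ep}$ at the maximum uses the uniform \emph{upper} bound on $C_\ep\Om_\ep$, not the lower bound on $F_\ep$; and your test function should carry the Phong--Sturm term $1/(\tl\psi_{T,\ep}+C_0)$ --- as in the paper and in Lemmas 2.3--2.4 of \cite{TW5}, which you invoke --- since without it the torsion--gradient term produced at the critical point is not absorbed.
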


\begin{proof}As $\psi_{T,\ep}$ is uniformly bounded by (\ref{2.1}), there exists a constant $C_0$ such that $\psi_{T,\ep}+C_0\geq 1$. Take $\ep_0$ 
small enough such that $\hat{\om}-\ep_0R_h\ge c\hat{\om}$ for some positive constant $c$, where $R_h=-\sdd\log|s|_h^2$ is the curvature of
 the Hermitian metric $h$. Let $\tl{\psi}_{T, \ep}=\psi_{T,\ep}-\ep_0\log|s|_h^2$ and define $$H=\log\tr_{\hat{\om}}\tl{\om}-A\tl{\psi}_{T,\ep}+\frac{1}{\tl{\psi}_{T,\ep}+C_0}.$$ 
Note that $H(y)$ goes to negative infinity as $y$ tends to $y_0$. Compute at a point in $N\setminus\{y_0\}$. Assume $\tr_{\hat{\om}}\tl{\om}\ge 1$ at this point.
From section 9 of ~\cite{TW4}, we have 
\begin{align} \label{4.1}
\De_{\tl{\om}}\log \tr_{\hat{\om}}\tl{\om}\ge \frac{2}{(\tr_{\hat{\om}}\tl{\om})^2}\re\lt(\tl{g}^{k\bar{q}}\hat{T}^p_{pk}\pa_{\bar{q}} \tr_{\hat{\om}}\tl{\om}\rt)-C\tr_{\tl{\om}}\hat{\om}-|\De_{\hat{\om}}F_{\ep}|-C.
\end{align}Assume that $H$ achieves a maximum at $z_0$. As $\pa_{\bar{q}} H=0$ at $z_0$, that is
$$\frac{\pa_{\bar{q}} \tr_{\hat{\om}}\tl{\om}}{\tr_{\hat{\om}}\tl{\om}}-A\pa_{\bar{q}}\tl{\psi}_{T,\ep}-\frac{\pa_{\bar{q}} \tl{\psi}_{T,\ep}}{(\tilde{\psi}_{T,\ep}+C_0)^2}=0,$$
we get 
\begin{align}
&\left |\frac{2}{(\tr_{\hat{\om}}\tl{\om})^2}\re(\tl{g}^{k\bar{q}}\hat{T}^j_{jk}\hat{\na}_{\bar{q}} \tr_{\hat{\om}}\tl{\om})\right |\notag\\[11pt]
=\ &\left |\frac{2}{\tr_{\hat{\om}}\tl{\om}}\re(\tl{g}^{k\bar{q}}\hat{T}^j_{jk}(A+\frac{1}{(\tl{\psi}_{T,\ep}+C_0)^2})\pa_{\bar{q}} \tl{\psi}_{T,\ep} \right |\notag\\[11pt]
\leq\ & \frac{|\pa \tl{\psi}_{T,\ep}|_{\tl{\om}}^2}{(\tl{\psi}_{T,\ep}+C_0)^3}+\frac{(\tl{\psi}_{T,\ep}+C_0)^3CA^2\tr_{\tl{\om}}\hat{\om}}{(\tr_{\hat{\om}}\tl{\om})^2}. \label{4.2}
\end{align}
If at $ z_0$, $(\tr_{\hat{\om}}\tl{\om})^2\le A^2(\tl{\psi}_{T,\ep}+C_0)^3$, then 
$$H\le\log A+\frac{3}{2}\log(\tl{\psi}_{T,\ep}+C_0)-A\tl{\psi}_{T,\ep}+\frac{1}{\tl{\psi}_{T,\ep}+C_0}.$$
As $\tl{\psi}_{T,\ep}+C_0\geq 1$,  we have an upper bound for $H$ and thus $\tl{\om}$ is bounded from above.
Otherwise, $A^2(\tl{\psi}_{T,\ep}+C_0)^3\le(\tr_{\hat{\om}}\tl{\om})^2$ at the maximum point. Moreover, by the definition of $\Om_{\ep}$, it follows from (\ref{e1}) and Proposition 3.2 that
 $$|\De_{\hat{\om}}F_{\ep}|\leq \frac{C}{|s|^{2\beta}}$$ for uniform constants $C$ and $\beta$. Together with (\ref{4.1}) and (\ref{4.2}) we have\\
\begingroup
\addtolength{\jot}{.8em}
\begin{align}
\De_{\tl{\om}}H=\ &\De_{\tl{\om}}\log \tr_{\hat{\om}}\tl{\om}+\frac{2|\pa \tl{\psi}_{T,\ep}|^2_{\tl{\om}}}{(\tl{\psi}_{T,\ep}+C_0)^3}\notag\\
\ &-\lt(A+\frac{1}{(\tl{\psi}_{T,\ep}+C_0)^2}\rt)\tr_{\tl{\om}}(\tl{\om}-\hat{\om}+\ep_0R_h)\notag\\
\geq\ &\De_{\tl{\om}}\log \tr_{\hat{\om}}\tl{\om}+cA\tr_{\tl{\om}}\hat{\om}+\frac{2|\pa \tl{\psi}_{T,\ep}|^2_{\tl{\om}}}{(\tl{\psi}_{T,\ep}+C_0)^3}-(A+1)n\notag\\
\geq \ &cA\tr_{\tl{\om}}\hat{\om}-\frac{(\tl{\psi}_{T,\ep}+C_0)^3CA^2\tr_{\tl{\om}}\hat{\om}}{(\tr_{\hat{\om}}\tl{\om})^2}-\frac{C}{|s|_h^{2\lambda}}\notag\\
\geq \ &(cA-C)\tr_{\tl{\om}}\hat{\om}-\frac{C}{|s|_h^{2\lambda}}. \label{4.3}
\end{align}
\endgroup
At the maximum point, $\De_{\tl{\om}}H\le 0,$ therefore $$\tr_{\tl{\om}}\hat{\om}\leq \frac{C}{|s|_h^{2\lambda}}$$ for sufficiently large $A$.
Then at $z_0$,
\begin{align*}\tr_{\hat{\om}}\tl{\om}\le\frac{1}{(n-1)!}(\tr_{\tl{\om}}\hat{\om})^{n-1}\frac{\tl{\om}^n}{\hat{\om}^n}\le \frac{C}{|s|_h^{2\beta}}
\end{align*}
as $\tl{\om}^n=C_{\ep}\Om_{\ep}$ has an upper bound by definition of $\Om_{\ep}$ and (\ref{e1}) holds. Thus there exists $C>0$, independent of $\ep$, 
such that $H\leq C$ for sufficiently large $A$. Since $\tl{\psi}_{T, \ep}+C_0\geq 1$, we obtain the desired estimates for $\tl{\om}_{T, \epsilon}=\tl{\om}$. \end{proof}

Recall that
\begin{align}\label{4.4}
\om_{\ep}(t)=\hat{\om}_t+\sdd\vp_{\ep}
\end{align} 
for $t\in[T, T'].$ From Lemma 5.4 of ~\cite{SW1}, we have the following volume bound. 
 \begin{lem}
There exist constants $\la>0$ and $C>0$, independent of $\ep$, such that 
$$\frac{\om_{\ep}^n}{\Om_N}\leq \frac{C}{|s|_h^{2\la}}$$
on $[T, T']\times (N\setminus \{y_0\})$.
\end{lem}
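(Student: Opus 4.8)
The plan is to run a maximum principle argument for the quantity $\vp_{\ep}$ (the solution of the parabolic complex Monge-Amp\`{e}re equation with initial data $\psi_{T,\ep}$) on $[T,T']\times N$, combined with the standard parabolic Schwarz-type estimate bounding $\log(\om_{\ep}^n/\Om_N)=\dot{\vp}_{\ep}$ from above. First I would recall from the construction in Section 2 that on $N\setminus\{y_0\}$ one has $\Om_{\ep}$ converging to $\tl{\om}_T^n$, with a uniform bound $\Om_{\ep}\le C|s|_h^{-2\la_0}\Om_N$ coming from the definition of $\Om_{\ep}$ and the estimate $(\ref{e1})$; together with $C_\ep\to 1$ this gives a uniform upper bound $F_\ep=\log(C_\ep\Om_\ep/\hat\om^n)\le C|s|_h^{-2\la_0}$ and hence, via Lemma 4.1, a uniform two-sided bound $|s|_h^{2\la}/C\le\tl\om_{T,\ep}\le C|s|_h^{-2\la}$ at time $t=T$ to serve as the initial-time input. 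The goal is to propagate such a volume bound to all of $[T,T']$.

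Next I would estimate $\dot\vp_\ep$ from above. Differentiating the equation in $t$ gives $\ptd\dot\vp_\ep=\tr_{\om_\ep}\dot{\hat\om}_t=\tr_{\om_\ep}\chi$, so $\ptd(\dot\vp_\ep)\le C\tr_{\om_\ep}\om_N$; to control the right side I would apply a Schwarz-type computation as in $(\ref{4.1})$ (following Section 9 of \cite{TW4}) to $\log\tr_{\hat\om_t}\om_\ep$, producing an inequality of the form $\Delta_{\om_\ep}\log\tr_{\hat\om_t}\om_\ep\ge -C\tr_{\om_\ep}\hat\om_t-|\Delta_{\hat\om_t}\log(\om_\ep^n/\Om_N)|-C$ together with the first-order torsion term. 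Then, as in the proof of Lemma 4.1, I would form a test function of the shape
\[
H=\log\tr_{\hat\om_t}\om_\ep-A\tl\psi+\frac{1}{\tl\psi+C_0}
\]
where $\tl\psi=\vp_\ep-\ep_0\log|s|_h^2$ (or a suitable variant including a multiple of $\dot\vp_\ep$ or $t$), with $\ep_0$ chosen so that $\pi^*\om_N-\ep_0 R_h>0$ uniformly, so that the $-A\tl\psi$ term produces a good $cA\tr_{\om_\ep}\hat\om_t$ and, after absorbing the bad terms exactly as in the $(\ref{4.3})$ chain, yields $\tr_{\om_\ep}\hat\om_t\le C|s|_h^{-2\la}$ at an interior maximum, hence everywhere on $[T,T']\times(N\setminus\{y_0\})$; note $H\to-\infty$ near $y_0$ so the maximum is interior, and at $t=T$ the bound holds by Lemma 4.1.

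Finally, combining $\tr_{\om_\ep}\hat\om_t\le C|s|_h^{-2\la}$ with the elementary inequality $\tr_{\hat\om_t}\om_\ep\le\frac{1}{(n-1)!}(\tr_{\om_\ep}\hat\om_t)^{n-1}\,\om_\ep^n/\hat\om_t^n$ reduces the volume bound to a bound on $\om_\ep^n/\Om_N$ itself; but $\om_\ep^n/\Om_N=e^{\dot\vp_\ep}$, and $\dot\vp_\ep\le C$ follows from a separate maximum principle on $\ptd\dot\vp_\ep=\tr_{\om_\ep}\chi$ together with the uniform bound on $\dot\vp_\ep$ at $t=T$ (which is $F_\ep$, bounded above by $C|s|_h^{-2\la_0}$ — so here one actually gets $\om_\ep^n/\Om_N\le C|s|_h^{-2\la}$ directly). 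I expect the main obstacle to be the bookkeeping of the various powers of $|s|_h$: one must choose the constants $\ep_0$, $A$, and the exponents $\la$ large enough, \emph{uniformly in $\ep$}, so that all error terms (the $|\Delta_{\hat\om_t}F_\ep|$ term, the torsion terms from $\om_N$ being only Gauduchon, and the gradient terms of $\log|s|_h^2$) are absorbed; the key point making this possible is that Lemma 4.1 and Proposition 3.2 already supply $\ep$-independent control at the initial time and of the relevant curvature/Ricci quantities.
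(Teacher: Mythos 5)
Your plan inverts the logical order of the two estimates and, as written, is circular. The trace bound $\tr_{\om_\ep}\hat\om_t\le C|s|_h^{-2\la}$ that you want to establish first ``exactly as in the (\ref{4.3}) chain'' is, in the parabolic setting $t\in[T,T']$, precisely the argument of Lemma 4.3 — and that argument consumes Lemma 4.2: at the maximum point one only obtains $\tr_{\om_\ep}\hat\om_t\le C\log(\Om_N/\om_\ep^n)+C_1$, and converting this into a bound on $\tr_{\hat\om_t}\om_\ep$ through $\tr_{\hat\om_t}\om_\ep\le\frac{1}{(n-1)!}(\tr_{\om_\ep}\hat\om_t)^{n-1}\,\om_\ep^n/\hat\om_t^n$ requires an a priori upper bound on $\om_\ep^n/\Om_N$ when that ratio is large, which is exactly the statement being proved (the paper invokes Lemma 4.2 at precisely this point in the proof of Lemma 4.3). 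The elliptic version at $t=T$ (Lemma 4.1) closes only because there $\tl\om_{T,\ep}^n=C_\ep\Om_\ep$ is prescribed explicitly; for $t>T$ the volume form is unknown, which is why Lemma 4.2 must come first. Your concluding step is also not a proof as stated: from $(\frac{\partial}{\partial t}-\Delta_{\om_\ep})\dot\vp_\ep=\tr_{\om_\ep}\chi$ and the initial bound $\dot\vp_\ep(T)=\log(C_\ep\Om_\ep/\Om_N)\le C-\la_0\log|s|_h^2$, a bare maximum principle does not ``directly'' give $\dot\vp_\ep\le C-\la\log|s|_h^2$, since both the source term and the initial datum are spatially unbounded near $y_0$.

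The argument the paper actually appeals to (Lemma 5.4 of \cite{SW1}) is a first-order maximum principle needing no Schwarz-type or torsion input at all: set $Q=\dot\vp_\ep-A\vp_\ep+\la_0(\pi^{-1})^*\log|s|_h^2$. Using $(\frac{\partial}{\partial t}-\Delta_{\om_\ep})\dot\vp_\ep=\tr_{\om_\ep}\chi$, $(\frac{\partial}{\partial t}-\Delta_{\om_\ep})\vp_\ep=\dot\vp_\ep-n+\tr_{\om_\ep}\hat\om_t$ and $\Delta_{\om_\ep}\log|s|_h^2=-\tr_{\om_\ep}R_h$ away from $y_0$, one gets $(\frac{\partial}{\partial t}-\Delta_{\om_\ep})Q\le\tr_{\om_\ep}(\chi+\la_0R_h-A\hat\om_t)-A\dot\vp_\ep+An\le-A\dot\vp_\ep+An$ once $A$ is large enough that $\chi+\la_0R_h\le A\hat\om_t$ uniformly on $[T,T']$; hence $\dot\vp_\ep\le n$ at any interior maximum, while $Q|_{t=T}\le C$ by the definition of $\Om_\ep$ together with (\ref{e1}), and $Q\to-\infty$ as the point approaches $y_0$. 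Combined with the uniform bound on $\|\vp_\ep\|_{L^\infty}$ from Proposition 2.1, this gives $\dot\vp_\ep\le C-\la_0\log|s|_h^2$, which is the lemma. In short, the correctors $-A\vp_\ep$ and $\la_0\log|s|_h^2$ already absorb $\tr_{\om_\ep}\chi$, so the detour through a second-order estimate is both unnecessary and, in your ordering, logically backwards.
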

With Lemma 4.1 and Lemma 4.2, the upper bound for $\om(t)$ can be obtained by using the argument of Tosatti-Weinkove ~\cite{TW5} 
(see also ~\cite{PS}). For simplicity, we write 
$\hat{\om}=\om_N,\ \ \tl{\om}=\om_{\ep}$ in the proof of the following lemma.
\begin{lem}There exist constants $\la>0$ and $C>0$, independent of $\ep$, such that on $[T, T']\times (N\setminus \{y_0\}),$
$$\frac{|s|_h^{2\la}}{C}\om_N\leq\om_{\ep}\leq\frac{C}{|s|_h^{2\la}}\om_N.$$
\end{lem}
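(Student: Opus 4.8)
\medskip
\noindent\emph{Proof proposal.} The plan is to establish the two inequalities separately. The upper bound $\om_{\ep}(t)\le C|s|_h^{-2\la}\om_N$ is the one indicated before the statement: I would run the parabolic Aubin--Yau (second-order) estimate of Tosatti--Weinkove \cite{TW5} (see also \cite{PS}) on $[T,T']\times(N\setminus\{y_0\})$, the time-dependent analogue of the computation just carried out for Lemma~4.1. Fixing $\ep_0$ small enough that $\hat\om_t-\ep_0R_h\ge c\hat\om_t$ on $[T,T']$ and setting $\tl\psi_{\ep}=\vp_{\ep}-\ep_0\log|s|_h^2$, one studies $H=\log\tr_{\hat\om_t}\om_{\ep}-A\tl\psi_{\ep}+(\tl\psi_{\ep}+C_0)^{-1}$, which tends to $-\infty$ along $y_0$; hence its maximum over the parabolic cylinder is attained either at $t=T$, where Lemma~4.1 controls it, or at an interior point, where the torsion-corrected lower bound for $\De_{\om_{\ep}}\log\tr_{\hat\om_t}\om_{\ep}$ (section~9 of \cite{TW4}, the torsion terms of $\hat\om_t$ absorbed as in \cite{TW5}), the flow equation $\p\vp_{\ep}=\log(\om_{\ep}^n/\Om_N)$, the pointwise inequality $\tr_{\hat\om_t}\om_{\ep}\le\frac{1}{(n-1)!}(\tr_{\om_{\ep}}\hat\om_t)^{n-1}\om_{\ep}^n/\hat\om_t^n$, and the volume bound of Lemma~4.2 combine to bound $\tr_{\hat\om_t}\om_{\ep}$ there. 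Since $\vp_{\ep}$ is uniformly bounded (Proposition~2.2) and $|s|_h^2\le1$, this gives $H\le C$ and hence the upper bound, with $\la,C$ independent of $\ep$.

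For the lower bound I would reduce to a volume estimate. The same pointwise inequality, with $\hat\om_t$ replaced by $\om_N$, gives
$$\tr_{\om_{\ep}}\om_N\le\frac{1}{(n-1)!}(\tr_{\om_N}\om_{\ep})^{n-1}\om_N^n/\om_{\ep}^n,$$
so in view of the upper bound it suffices to find $\ep$-independent constants $c,\mu>0$ with $\om_{\ep}^n\ge c\,|s|_h^{2\mu}\om_N^n$ on $[T,T']\times(N\setminus\{y_0\})$ (the case $t=T$ being Lemma~4.1). Writing $\dot\vp_{\ep}=\log(\om_{\ep}^n/\Om_N)$ and differentiating the flow equation, $\ptd\dot\vp_{\ep}=\tr_{\om_{\ep}}\chi$, all Laplacians taken with respect to $\om_{\ep}$. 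I would then apply the minimum principle to $Q=\dot\vp_{\ep}+A\tl\psi_{\ep}=\dot\vp_{\ep}+A\vp_{\ep}-A\ep_0\log|s|_h^2$, keeping $\ep_0$ as above and taking $A$ large. Since $\om_{\ep}(t)$ is a smooth metric on all of $N$ for $t>T$ (so $\dot\vp_{\ep}$ is bounded near $y_0$) while $-\log|s|_h^2\to+\infty$ at $y_0$, $Q\to+\infty$ near $y_0$ and its minimum over the cylinder is attained away from $y_0$. At $t=T$, Lemma~4.1 gives $\tl\om_{T,\ep}^n\ge c\,|s|_h^{2n\la}\Om_N$, hence $\dot\vp_{\ep}|_{t=T}\ge -C+n\la\log|s|_h^2$, so that $Q|_{t=T}\ge -C+(A\ep_0-n\la)\log|s|_h^{-2}+A\psi_{T,\ep}\ge -C$ once $A\ep_0\ge n\la$ (using the uniform bound on $\psi_{T,\ep}$).

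At an interior minimum of $Q$ one has $\ptd Q\le0$. Using $\p\tl\psi_{\ep}=\dot\vp_{\ep}$, $\De_{\om_{\ep}}\tl\psi_{\ep}=n-\tr_{\om_{\ep}}(\hat\om_t-\ep_0R_h)$ together with the positivity $\hat\om_t-\ep_0R_h\ge c\hat\om_t$, the bound $|\tr_{\om_{\ep}}\chi|\le C\tr_{\om_{\ep}}\om_N$, and $\hat\om_t\ge c'\om_N$, one obtains there
$$0\ \ge\ \Big(\frac{cc'}{2}A-C\Big)\tr_{\om_{\ep}}\om_N+A\dot\vp_{\ep}-An+\frac{c}{2}A\tr_{\om_{\ep}}\hat\om_t;$$
for $A$ large the first term is nonnegative, and the arithmetic--geometric mean inequality $\tr_{\om_{\ep}}\hat\om_t\ge n(\hat\om_t^n/\om_{\ep}^n)^{1/n}\ge c''e^{-\dot\vp_{\ep}/n}$ then forces $\dot\vp_{\ep}$ to be bounded below at that point (the exponential beats the linear term), whence $Q\ge -C$ there as well, since $\tl\psi_{\ep}\ge -C$. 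With the $t=T$ estimate this gives $Q\ge -C$ on the whole cylinder, i.e. $\dot\vp_{\ep}+A\ep_0\log|s|_h^{-2}\ge -C$, the desired volume lower bound with $\mu=A\ep_0$. Feeding it and the upper bound into the displayed pointwise inequality gives $\tr_{\om_{\ep}}\om_N\le C|s|_h^{-2((n-1)\la+A\ep_0)}$, i.e. the lower bound for $\om_{\ep}$ after relabeling $\la$.

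The step I expect to be hardest is the uniformity bookkeeping that runs through sections~3 and~4: every constant must stay independent of $\ep$ while the estimates are allowed to degenerate only like a fixed power of $|s|_h$ near $y_0$. In particular the barrier term $-\ep_0\log|s|_h^2$ must be tuned so that the extrema of both $H$ and $Q$ land in $\{|s|_h\ne0\}$ \emph{and} so that at $t=T$ the degenerate value furnished by Lemma~4.1 is dominated (hence the requirement $A\ep_0\ge n\la$); and since $\om_N$ and $\hat\om_t$ are only Gauduchon, the second-order estimate for $\log\tr_{\hat\om_t}\om_{\ep}$ carries extra torsion terms, handled as in \cite{TW4,TW5}. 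It is the positivity $\hat\om_t-\ep_0R_h\ge c\hat\om_t$, rather than any pointwise control of $R_h$ near $y_0$ (which fails), that makes the lower-bound argument go through.
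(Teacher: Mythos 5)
Your proposal is correct and follows essentially the same route as the paper: the upper bound is proved by exactly the paper's argument, namely the maximum principle applied to $H=\log\tr_{\hat{\om}_t}\om_{\ep}-A\tl{\vp}_{\ep}+(\tl{\vp}_{\ep}+C_0)^{-1}$ with $\tl{\vp}_{\ep}=\vp_{\ep}-\ep_0\log|s|_h^2$, using the trace evolution inequality of \cite{TW4}, Lemma 4.1 to control $H|_{t=T}$, and the volume bound of Lemma 4.2 at the maximum point. For the lower bound the paper only cites ``an argument similar to Lemma 2.3 of \cite{TW5}'', and your reconstruction --- a minimum principle for $Q=\dot{\vp}_{\ep}+A\tl{\vp}_{\ep}$ giving $\om_{\ep}^n\ge c|s|_h^{2A\ep_0}\Om_N$ (with the condition $A\ep_0\ge n\la$ ensuring the $t=T$ value is controlled by Lemma 4.1), fed into $\tr_{\om_{\ep}}\om_N\le\frac{1}{(n-1)!}(\tr_{\om_N}\om_{\ep})^{n-1}\om_N^n/\om_{\ep}^n$ together with the upper bound --- is precisely the standard argument being referenced, with sound uniformity bookkeeping.
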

\begin{proof}Take $\ep_0$ small enough such that $\hat{\om}_t-\ep_0 R_h\ge c\hat{\om}$ for any $t\in [T, T']$ for some constant $c>0$. 
Let $\tl{\vp}_{\ep}=\vp_{\ep}-\ep_0\log|s|_h^2$. By Proposition 2.1, there exists a positive constant $C_0$, such that  $\tl{\vp}_{\ep}+C_0\geq 1.$
Define $$H=\log\tr_{\hat{\om}}\tl{\om}-A\tl{\vp}_{\ep}+\frac{1}{\tl{\vp}_{\ep}+C_0},$$ where $A$ is a positive constant to be determined. We have
$$H|_{t=T}\leq|s|_h^{2A\ep_0}\log\tr_{\om_N}\tl{\om}_{T, \ep}-A\psi_{T,\ep}+1.$$ By Lemma 4.1, $H$ is uniformly bounded from above 
at time $T$. Moreover, $H(t, y)$ tends to negative infinity as $y$ tends to $y_0$, for any $t\in[T, T']$. Compute at a point in ${N\setminus\{y_0\}}$ with $\tr_{\hat{\om}}\tl{\om}\geq 1$. 
From Proposition 3.1 in \cite{TW4}, 
\begin{align}\label{4.5}
\lt(\p-\De_{\tl{\om}}\rt)\log\tr_{\hat{\om}}\tl{\om}  
\leq\frac{2}{(\tr_{\hat{\om}}\tl{\om})^2}\re\left(\tl{g}^{k\bar{q}}\hat{T}_{kp}^p\pa_{\bar{q}}\tr_{\hat{\om}}\tl{\om}\right)+C\tr_{\tl{\om}}\hat{\om}.
\end{align}

Assume $H$ achieves its maximum at $(t_0, z_0)$, we have $\pa_{\bar{q}}\tr_{\hat{\om}}\tl{\om}=0$ at this point, thus 
\begin{align}
&\left|\frac{2}{(\tr_{\hat{\om}}\tl{\om})^2}\re\left(\tl{g}^{k\bar{q}}\hat{T}_{kp}^p\pa_{\bar{q}}\tr_{\hat{\om}}\tl{\om}\right)\right|\notag\\[8pt]
 \leq\  &\lt |\frac{2}{\tr_{\hat{\om}}\tl{\om}}\re\lt(\tl{g}^{k\bar{q}}\hat{T}_{kp}^p\lt(A+\frac{1}{(\tl{\vp}_{\ep}+C_0)^2}\rt)\pa_{\bar{q}}\tl{\vp}_{\ep}\rt)\rt|\notag\\[8pt]
\leq\ &\frac{|\pa\tl{\vp}_{\ep}|^2}{(\tl{\vp}_{\ep}+C_0)^3}+CA^2(\tl{\vp}_{\ep}+C_0)^3\frac{\tr_{\tl{\om}}\hat{\om}}{(\tr_{\hat{\om}}\tl{\om})^2}. \label{4.6}
\end{align}
If at $(t_0, z_0)$, $(\tr_{\hat{\om}}\tl{\om})^2\le A^2(\tl{\vp}_{\ep}+C_0)^3$, then 
$$H\le\log A+\frac{3}{2}\log(\tl{\vp}_{\ep}+C_0)-A\tl{\vp}_{\ep}+\frac{1}{\tl{\vp}_{\ep}+C_0}.$$ As $\tl{\vp}_{\ep}+C_0\geq 1$,  
we have an upper bound for $H$ and thus $\tl{\om}$ is bounded from above.
Otherwise, $A^2(\tl{\vp}_{\ep}+C_0)^3\le(\tr_{\hat{\om}}\tl{\om})^2$ at the maximum point. Computing the evolution of $H$, it follows from (\ref{4.5}) and (\ref{4.6}) at $(t_0, z_0)$
\begin{align*}
\lt(\p-\De_{\tl{\om}}\rt)H\le\ &C\tr_{\tl{\om}}\hat{\om}-\lt(A+\frac{1}{\tl{\vp}_{\ep}+C_0)^2}\rt)\dot{\vp}_{\ep}\notag\\[8pt]
\ &+\lt(A+\frac{1}{\tl{\vp}_{\ep}+C_0)^2}\rt)\tr_{\tl{\om}}(\tl{\om}-\hat{\om}_t+\ep_0R_h)\notag\\[8pt]
\le\ &C\tr_{\tl{\om}}\hat{\om}+(A+1)\log\frac{\Om_N}{\tl{\om}^n}+C+(A+1)n\notag\\[8pt]
 \ &-A\tr_{\tl{\om}}(\hat{\om}_t-\ep_0R_h).
\end{align*}
As $\hat{\om}_t-\ep_0R_h\ge c\hat{\om}$ and  $\lt(\p-\De_{\tl{\om}}\rt)H\le 0$ at this point, we have $$\tr_{\tl{\om}}\hat{\om}\le C\log\frac{\Om_N}{\tilde{\om}^n}+C_1.$$  for $A$ large enough. Then at $(t_0, z_0),$
\begin{align*}\tr_{\hat{\om}}\tl{\om}&\le\frac{1}{(n-1)!}(\tr_{\tl{\om}}\hat{\om})^{n-1}\frac{\det{\tl{\om}}}{\det{\hat{\om}}}\notag\\
&\le C\frac{\tl{\om}^n}{\Om_N}\lt(\log\frac{\Om_N}{\tl{\om}^n}\rt)^{n-1}+C'\notag\\
&\le \frac{C}{|s|_h^{2\beta}}
\end{align*}
as $\tilde{\om}^n/\Om_N\leq \frac{C}{|s|_h^{2\la}}$ by Lemma 4.2. Hence there exists $C>0$, independent of $\ep$, such that $H\leq C$
 for sufficiently large A. Since $\tl{\vp}_{\ep}+C_0\geq 1$, we see that $\tl{\om}$ is uniformly bounded from above. The
 lower bound follows from an argument similar to the proof of Lemma 2.3 in ~\cite{TW5}.
\end{proof}

\begin{proof}[Proof of Theorem 1.2] The existence and uniqueness is given by Proposition 2.1. The characterization of the maximal time $T_N$ follows from Theorem 1.2 in \cite{TW4}.  By Lemma 4.3, for any
 compact subset $K\subset N\setminus\{y_0\}$, there exists a positive constant $C_K$ such that $$\frac{\om_N}{C_K}\le \om(t)\le C_K \om_N\hspace*{6mm}\ \text{on}\ [T, T']\times N.$$ 
The local estimates of Gill ~\cite{Gill} then gives uniform $C^{\infty}$ estimates for $\om(t)$ on compact subsets of $N\setminus\{y_0\}$. The smooth convergence follows from this and we finish the proof. 
\end{proof}

\section{Proof of Theorem 1.3}
In this section, we prove the smoothing property for the Chern-Ricci flow with rough initial data. We follow the arguments in Song-Tian \cite{ST} closely. Using the same notations as in the introduction, assume that $\om_0$ satisfies the condition (\ref{1.3}),
then by the same arguments as in section 4 of \cite{N}, there exist functions $\psi_j\in PSH(M, \om_0)\cap C^{\infty}(M)$ such that \begin{align}\label{5.2}
\lim_{j\rightarrow \infty} \|\psi_j-\vp_0\|_{L^{\infty}(M)}=0.\end{align}
Write $\om_{0, j}=\om_0+\sdd\psi_j$. The Chern-Ricci flow starting at $\om_{0,j}$ can be 
reduced to a parabolic complex Monge-Amp\`{e}re equation. First denote
 $$T=\sup\{t\geq 0|\exists\psi\in C^{\infty}(M),\,  \ \omega_0-t\Ric(\omega_0)+\sqrt{-1}\partial\bar{\partial }\psi>0\}.$$ 
Then for any $T'<T$, there exists $\psi_{T'}\in C^{\infty}(M))$ such that $$\beta=\om_0-T'\Ric(\om_0)+\sdd\psi_{T'}>0.$$ Fix $T'<T$ 
and define smooth Hermitian metrics $$\hat{\om}_t=(1-\frac{t}{T'})\om_0+\frac{t}{T'}\beta=\om_0+t\chi$$ on $[0, T']$, 
where $\chi= \frac{1}{T'}\sdd\psi_{T'}-\Ric(\om_0).$ Let $\Omega$ be a volume form satisfying $\sdd\log\Omega=\p\hat{\om}_t=\chi.$ 
It follows that if $\vp_j$ solves the parabolic complex Monge-Amp\`{e}re equation
\begin{align}\frac{\pa\vp_j}{\pa t}=\log\frac{(\hat{\om}_t+\sdd\vp_j)^n}{\Omega},\ \ \ \ \vp_j(0)=\psi_j\end{align}
 for $t\in[0, T']$, then $\om_j=\hat{\om}_t+\sdd\vp_j$ solves the Chern-Ricci flow starting at $\om_{0,j}=\om_0+\sdd\psi_j$.  
We will show uniform $C^{\infty}$ bounds for $\vp_j$ and to prove Theorem 1.3.

Let $$
F=\frac{(\om_0+\sdd\vp_0)^n}{\Omega}\in L^p(M).$$ We use $C, C', C_i, ... $ to denote uniform constants
 depending only on $\om_0,\  ||\vp_0||_{L^{\infty}(M)}$ and $||F||_{L^p(M)}$ and varying from line to line.

 First we have the following two lemmas from ~\cite{ST} (Lemma 3.1 and 3.2). The proof is exactly the same as in ~\cite{ST}. \begin{lem}
There exists $C>0$ such that for any $t \in [0, T'],$\\
\hspace*{5.6cm}$||\varphi_j||_{L^{\infty}(M)}\leq C.$\\ 
Moreover, $\{\varphi_j\}$ is a Cauchy sequence in $C^0([0, T']\times M)$, i.e.,
$$\lim_{j,k\rightarrow\infty}||\varphi_j-\varphi_k||_{L^{\infty}([0, T']\times M)}=0.$$
\end{lem}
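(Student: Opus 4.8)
The plan is to follow the standard Song--Tian strategy for uniform estimates under approximation of rough initial data, adapting the K\"ahler arguments to the Hermitian setting where torsion terms appear. The two assertions are logically coupled: the uniform $L^\infty$ bound comes first, and then the Cauchy property is extracted by a comparison of two flows with nearby initial data.

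For the uniform bound $\|\vp_j\|_{L^\infty}\le C$, I would apply the maximum principle to $\vp_j$ directly and to combinations such as $\vp_j - At$. The upper bound follows from the fact that $\hat\om_t+\sdd\vp_j$ has bounded volume relative to $\Omega$: at an interior spatial maximum of $\vp_j(t,\cdot)$ one has $\sdd\vp_j\le 0$, hence $\dot\vp_j=\log\big((\hat\om_t+\sdd\vp_j)^n/\Omega\big)\le\log(\hat\om_t^n/\Omega)\le C$, so $\vp_j\le \|\psi_j\|_{L^\infty}+CT'$; since $\|\psi_j\|_{L^\infty}$ is controlled uniformly by~\eqref{5.2}, this gives a uniform upper bound. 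The lower bound is the substantive half: here one cannot use a naive maximum principle because $F\in L^p$ only, not $L^\infty$. Following Song--Tian, the device is to compare $\vp_j$ with the solution $u$ of the \emph{elliptic} complex Monge--Amp\`ere equation $(\hat\om_{T'}+\sdd u)^n = c_0 F\,\Omega$ (or an $\om_0$-version), whose existence and uniform bound is furnished by the Tosatti--Weinkove solution of the Hermitian Monge--Amp\`ere equation~\cite{TW2} together with the Ko\l odziej-type $L^\infty$ estimate for $L^p$ densities from~\cite{KN,DK}; the quantity $\vp_j - (1-t/T')\cdot 0 - (t/T')u$ (plus a linear-in-$t$ correction) then satisfies a differential inequality to which the maximum principle applies, yielding $\vp_j\ge -C$. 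One must check that the torsion of $\hat\om_t$ does not enter this particular computation — it does not, since only the Monge--Amp\`ere operator and zeroth-order terms are involved — which is why ``the proof is exactly the same as in~\cite{ST}.''

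For the Cauchy statement, fix $j,k$ and set $\delta_{jk}=\|\psi_j-\psi_k\|_{L^\infty(M)}$, which tends to $0$ as $j,k\to\infty$ by~\eqref{5.2}. The two functions $\vp_j$ and $\vp_k$ solve the same parabolic Monge--Amp\`ere equation with initial data differing by at most $\delta_{jk}$ in $L^\infty$. Applying the maximum principle to $\vp_j - \vp_k \mp \delta_{jk}$: at a spatial maximum of $\vp_j-\vp_k$ one has $\sdd(\vp_j-\vp_k)\le 0$, so $\hat\om_t+\sdd\vp_j\le \hat\om_t+\sdd\vp_k$ there as Hermitian forms, hence $\dot\vp_j-\dot\vp_k=\log\frac{(\hat\om_t+\sdd\vp_j)^n}{(\hat\om_t+\sdd\vp_k)^n}\le 0$; therefore $\partial_t\big(\max_M(\vp_j-\vp_k)\big)\le 0$ in the barrier sense, and $\max_M(\vp_j-\vp_k)(t)\le\max_M(\psi_j-\psi_k)\le\delta_{jk}$. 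Symmetrically $\min_M(\vp_j-\vp_k)(t)\ge-\delta_{jk}$, so $\|\vp_j-\vp_k\|_{L^\infty([0,T']\times M)}\le\delta_{jk}\to 0$. Note this half uses only the monotonicity $X\le Y\Rightarrow\det X\le\det Y$ for Hermitian forms and requires no torsion bookkeeping at all.

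The main obstacle is the uniform lower bound for $\vp_j$, i.e.\ the step that genuinely uses $F\in L^p$: one must correctly set up the comparison with the elliptic Hermitian Monge--Amp\`ere solution and verify that the relevant stability/$L^\infty$ estimates of~\cite{KN,DK,TW2} apply with constants independent of $j$ (they do, since the densities $(\om_{0,j})^n/\Omega$ are controlled in $L^p$ uniformly in $j$ via~\eqref{5.2} and the quasi-psh regularization). Everything else is a routine transcription of~\cite{ST}.
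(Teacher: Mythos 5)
Your upper-bound and Cauchy arguments are exactly the intended ones: the paper simply invokes Song--Tian's Lemma 3.1, whose proof is the maximum-principle computation you describe, giving in particular $\|\vp_j-\vp_k\|_{L^{\infty}([0,T']\times M)}\le\|\psi_j-\psi_k\|_{L^{\infty}(M)}$, which together with the uniform approximation of $\vp_0$ by the $\psi_j$ yields the Cauchy property. The genuine gap is in the step you yourself single out as the substantive one, the uniform lower bound, and it lies in your choice of elliptic comparison. If $u$ solves $(\hat{\om}_{T'}+\sdd u)^n=c_0F\,\Omega$, the comparison cannot close: for any barrier of the form $\Phi=(1-\tfrac{t}{T'})\psi_j+\tfrac{t}{T'}u+(\text{correction in }t)$, the only available lower bound for the Monge--Amp\`ere measure is $(\hat{\om}_t+\sdd\Phi)^n\ge(\tfrac{t}{T'})^n(\hat{\om}_{T'}+\sdd u)^n=(\tfrac{t}{T'})^n c_0F\,\Omega$, so the subsolution inequality $\dot\Phi\le\log\big((\hat{\om}_t+\sdd\Phi)^n/\Omega\big)$ would require $\log F$ to be bounded below --- which fails precisely because $F$ is only in $L^p$ and may vanish. (The same obstruction appears if one instead tries to run the maximum principle on $\vp_j-\tfrac{t}{T'}u-At$: at the minimum one only learns $\log F\le C$ there, which gives nothing.) In addition, such a $u$ is merely bounded/continuous, so the pointwise manipulation of $\sdd u$ at an extremum is itself unjustified without extra pluripotential input.

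The correct device --- and the one in Song--Tian, transcribed verbatim to the Hermitian case --- is to compare with the \emph{smooth} solution $\varrho$ of $(\hat{\om}_{T'}+\sdd\varrho)^n=e^{b}\Omega$ furnished by Tosatti--Weinkove's theorem for smooth densities (no Ko{\l}odziej input here), and to take $\Phi=(1-\tfrac{t}{T'})\psi_j+\tfrac{t}{T'}\varrho+nt\log\tfrac{t}{T'}-At$. Since $\hat{\om}_t=(1-\tfrac{t}{T'})\om_0+\tfrac{t}{T'}\hat{\om}_{T'}$ and $\psi_j$ is $\om_0$-psh, one has $\hat{\om}_t+\sdd\Phi\ge\tfrac{t}{T'}(\hat{\om}_{T'}+\sdd\varrho)>0$, hence $\log\big((\hat{\om}_t+\sdd\Phi)^n/\Omega\big)\ge n\log\tfrac{t}{T'}+b$, while $\dot\Phi=\tfrac{\varrho-\psi_j}{T'}+n\log\tfrac{t}{T'}+n-A$; for $A$ large (uniform in $j$, since $\|\psi_j\|_{L^{\infty}}$, $\|\varrho\|_{L^{\infty}}$, $b$ are uniformly bounded) $\Phi$ is a subsolution with $\Phi|_{t=0}=\psi_j$, so $\vp_j\ge\Phi\ge-C$. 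Note that your ``linear-in-$t$ correction'' cannot replace the $nt\log\tfrac{t}{T'}$ term: without it the required inequality fails as $t\to0^{+}$, because near $t=0$ the barrier's Monge--Amp\`ere measure is only bounded below by $(\tfrac{t}{T'})^n e^{b}\Omega$. A consequence worth internalizing is that $F\in L^p$ does not drive Lemma 5.1 at all; under condition (1.3) it enters only through the continuity of $\vp_0$ and the uniform approximation by smooth $\psi_j$ (via \cite{KN,DK}, as quoted from \cite{N}), which the paper establishes before the lemma, while the rest of the proof needs only $\|\psi_j\|_{L^{\infty}}\le C$.
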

\begin{lem}There exists $C>0$ such that $$ \frac{t^n}{C}\leq \frac{(\hat{\omega}_t+\sqrt{-1}\partial\bar{\partial}\varphi_j)^n}{\Omega}\leq e^{\frac{C}{t}}.$$ for any $t \in [0, T'].$
\end{lem}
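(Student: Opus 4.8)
The plan is to derive the two-sided bound on $\dot\vp_j = \log\frac{(\hat\om_t+\sdd\vp_j)^n}{\Omega}$ by the maximum principle applied to a suitably weighted test quantity, following Song--Tian's argument for the K\"ahler--Ricci flow verbatim. The key point is that the flow is a translate of the complex Monge--Amp\`ere flow: differentiating the equation in $t$ gives $\ptd \dot\vp_j = \tr_{\om_j}\chi$, where $\om_j = \hat\om_t+\sdd\vp_j$. This is the evolution equation that drives both estimates.

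For the upper bound I would consider $Q = t\,\dot\vp_j - \vp_j - nt$ (or a minor variant), so that $\p Q = \dot\vp_j + t\,\ddot\vp_j - \dot\vp_j - n = t\,\tr_{\om_j}\chi - n$ after substituting the $t$-derivative of the equation, while $-\Delta_{\om_j}Q$ contributes $-t\,\tr_{\om_j}\chi + \tr_{\om_j}(\om_j - \hat\om_t) = -t\,\tr_{\om_j}\chi + n - \tr_{\om_j}\hat\om_t$. Thus $\ptd Q \le -\tr_{\om_j}\hat\om_t \le 0$ away from complications, and since $Q|_{t=0} = -\psi_j$ is uniformly bounded by \eqref{5.2}, the maximum principle gives $t\,\dot\vp_j \le \vp_j + nt + C \le C'$ using Lemma 5.1, hence $\dot\vp_j \le C/t$, i.e. $\frac{(\hat\om_t+\sdd\vp_j)^n}{\Omega} \le e^{C/t}$. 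For the lower bound I would run the maximum principle on something like $\tilde Q = (t-T')\,\dot\vp_j - \vp_j + n(t-T') + A t$ or, more directly following Song--Tian, on $\dot\vp_j + \vp_j$ type quantities combined with the $L^\infty$ bound on $\vp_j$ and the a priori bound $\dot\vp_j \le C$ already available from Lemma 3.3 of \cite{TW5}-type reasoning; the outcome is $\dot\vp_j \ge -C + n\log t$ up to constants, which after exponentiating yields $\frac{(\hat\om_t+\sdd\vp_j)^n}{\Omega} \ge t^n/C$. One must be slightly careful that $\chi$ is not necessarily positive or negative, but it is bounded, so $\tr_{\om_j}\chi$ can be controlled by $\pm C\,\tr_{\om_j}\hat\om_t$ and absorbed.

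The main obstacle — and the reason the Hermitian case is not a completely trivial transcription of \cite{ST} — is that on a general Hermitian manifold the Laplacian $\Delta_{\om_j}$ is the Chern Laplacian and there are torsion terms in the Bochner-type computations, so the evolution inequality $\ptd \tr_{\hat\om}\om_j \le \dots$ and the commutation identities used to close the maximum-principle estimate pick up extra first-order terms. However, for this particular lemma only the scalar equation $\ptd\dot\vp_j = \tr_{\om_j}\chi$ is needed and no torsion appears there, so the computation genuinely does go through unchanged; the delicate torsion bookkeeping is confined to the higher-order estimates (as in \cite{TW4}) rather than to Lemma 5.2 itself. Accordingly, I expect the proof to be short: set up $Q$, compute $\ptd Q$ using the differentiated equation, invoke the maximum principle together with Lemma 5.1 and the initial bound \eqref{5.2}, and exponentiate. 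As the excerpt states, ``the proof is exactly the same as in \cite{ST},'' and I would simply cite that computation after recording these one-line evolution identities.
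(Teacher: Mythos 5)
Your overall plan (reduce to the scalar identity $\ptd\dot\vp_j=\tr_{\om_j}\chi$ and run the Song--Tian maximum-principle argument, then cite \cite{ST}) is exactly what the paper does: the paper offers no independent proof and simply states that the proof is the same as Lemma 3.1/3.2 of \cite{ST}. Your treatment of the upper bound is essentially correct: with $Q=t\dot\vp_j-\vp_j-nt$ one gets, since $\hat\om_t=\om_0+t\chi$, the exact identity $\ptd Q=\tr_{\om_j}(t\chi-\hat\om_t)=-\tr_{\om_j}\om_0\le 0$ (so no absorption of $\chi$ is even needed), $Q|_{t=0}=-\psi_j$ is bounded by (5.1), and Lemma 5.1 then gives $\dot\vp_j\le C/t$.

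The lower bound, however, is where your sketch has a genuine gap. The quantity you name, $\tilde Q=(t-T')\dot\vp_j-\vp_j+n(t-T')+At$, places the degenerating weight at $t=T'$, but the difficulty in this lemma sits at $t=0$: at the initial time $\tilde Q|_{t=0}=-T'\dot\vp_j(0)-\psi_j$, and $\dot\vp_j(0)=\log\bigl((\om_0+\sdd\psi_j)^n/\Omega\bigr)$ has no lower bound that is uniform in $j$ -- the data are only controlled in $L^p$, and the $t^n$ factor in the statement exists precisely because no such bound is assumed. So the maximum principle with this test function cannot be closed uniformly in $j$. Relatedly, your appeal to ``$\dot\vp_j\le C$ from Lemma 3.3 of \cite{TW5}-type reasoning'' is not available here (that bound is for the fixed smooth flow of Sections 2--4; in the present setting the best upper bound is the $e^{C/t}$ you are proving), and an upper bound on $\dot\vp_j$ would not help the volume lower bound anyway. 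The mechanism that actually yields $\dot\vp_j\ge n\log t-C$ is to use an \emph{unweighted} quantity of the form $H=\dot\vp_j+A\vp_j-n\log t$: for each fixed $j$ the approximant $\psi_j$ is smooth, so $\dot\vp_j$ is continuous up to $t=0$ and $H\to+\infty$ as $t\to 0^+$; hence the space-time minimum of $H$ occurs at some $t_0>0$, where $\ptd H\le 0$ together with $\tr_{\om_j}(\chi+A\hat\om_t)\ge c\,\tr_{\om_j}\om_0\ge c'e^{-\dot\vp_j/n}$ forces $\dot\vp_j\ge n\log t-C$ with $C$ independent of $j$. Note also that weighting $\dot\vp_j$ by $t$ (as in your ``$t\dot\vp_j$''-type alternatives) cannot give the stated bound: after dividing by $t$ it only yields $\om_j^n/\Omega\ge t^n e^{-C/t}$, not $t^n/C$. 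With the test function corrected as above, the rest of your argument goes through and, as you say, no torsion terms enter at this level.
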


For convenience, we write $\om'=\om_j$ the solution of the Chern-Ricci flow starting at $\om_{0,j}$ and $g',\ \Delta',\ |\cdot|_{g'}, . . .$
 the notations corresponding to $\om_j$ for a fix $j$. Similarly, we use $\nabla^0$ to denote the covariant derivative with respect to $g_0$.  
All the bounds obtained in the following lemmas are independent of $j$. 

To prove the second order estimate,  we will need the following proposition. It follows from Proposition 3.1 in ~\cite{TW4} 
as $(T_0)_{ki\bar{l}}=(T_{0,j})_{ki\bar{l}},$ where $T_0$ and $T_{0, j}$ are the torsions corresponding to $\om_0$ and $\om_{0,j}$.
\begin{prop}{\normalfont{(Tosatti-Weinkove})} Assume that at a point $\tr_{\om_0}\om'\geq 1$, then
\begin{align*}
(\frac{\partial }{\partial t}-\Delta')\log\tr_{\omega_0}\omega'  
\leq\frac{2}{(\tr_{\omega_0}\omega')^2}\re(g'^{p\bar{q}}(T_0)^i_{pi}\nabla^{0}_{\bar{q}}\tr_{\omega_0}\omega')+C\tr_{\omega'}\omega_0
\end{align*}
at this point for some constant $C$ depending only on $g_0$.
\end{prop}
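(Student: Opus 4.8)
The plan is to obtain this inequality as a direct consequence of Proposition 3.1 in \cite{TW4}, the Hermitian parabolic analogue of the classical Aubin--Yau second-order estimate. For a smooth solution $\om(t)$ of the Chern-Ricci flow compared against a fixed Hermitian metric, that result furnishes a pointwise differential inequality for the logarithm of the trace of one against the other: at a point where the trace is at least $1$,
\[
\left(\frac{\pa}{\pa t}-\Delta'\right)\log\tr_{\om_0}\om'\leq\frac{2}{(\tr_{\om_0}\om')^2}\re\left(g'^{p\bar{q}}(T_0)^i_{pi}\nabla^{0}_{\bar{q}}\tr_{\om_0}\om'\right)+C\tr_{\om'}\om_0,
\]
where the first-order term carries the torsion of the comparison metric and $C$ depends only on its torsion and curvature. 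I would apply this with the comparison metric taken to be $\om_0$ and with $\om'=\om_j$, the smooth solution of the Chern-Ricci flow starting at $\om_{0,j}=\om_0+\sdd\psi_j$; as above, $\Delta'$ is the Chern Laplacian of $g'$ and $\nabla^{0}$ the Chern connection of $g_0$.

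The derivation of the inequality itself is precisely that of section 3 of \cite{TW4}, and the plan is to follow it verbatim: differentiate $\log\tr_{\om_0}\om'$ in time using the flow equation $\pa_t g'_{i\bar{j}}=-R'_{i\bar{j}}$; expand $\Delta'\tr_{\om_0}\om'$ via the Bochner-type identity for the Chern connection; observe that the $g_0$-trace of the Chern-Ricci term of $g'$ cancels the contribution of the time derivative; control the positive gradient quantities by Cauchy--Schwarz, at the cost of exactly the displayed torsion term; and bound the residual terms, which are built only from the torsion and curvature of $g_0$, by $C\tr_{\om'}\om_0$. None of these steps is a genuine obstacle, since all the analysis is already contained in \cite{TW4}.

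The only point that requires comment is that the torsion appearing in the estimate, and the constant $C$, depend on $g_0$ alone and hence are uniform in $j$, even though $g_0$ is replaced by $g_{0,j}$ if one sets the computation up with the initial metric of the flow. This is the content of the identity $(T_0)_{ki\bar{l}}=(T_{0,j})_{ki\bar{l}}$: writing $(g_{0,j})_{i\bar{l}}=(g_0)_{i\bar{l}}+\pa_i\pa_{\bar{l}}\psi_j$ and using that $\pa_k\pa_i\pa_{\bar{l}}\psi_j$ is symmetric in $k$ and $i$,
\[
(T_{0,j})_{ki\bar{l}}=\pa_k(g_{0,j})_{i\bar{l}}-\pa_i(g_{0,j})_{k\bar{l}}=\pa_k(g_0)_{i\bar{l}}-\pa_i(g_0)_{k\bar{l}}=(T_0)_{ki\bar{l}},
\]
and the same reasoning shows that the reference forms $\hat{\om}_t=\om_0+t\chi$ have the same lower-index torsion as $\om_0$, since $\chi$ is locally $\sdd$-exact. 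Thus the torsion term entering the estimate is that of $\om_0$, the constant $C$ depends only on $g_0$, and the proposition follows.
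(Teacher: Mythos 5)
Your proposal is correct and follows the same route as the paper: the inequality is quoted directly from Proposition 3.1 of \cite{TW4}, and the only point needing verification is the torsion identity $(T_0)_{ki\bar{l}}=(T_{0,j})_{ki\bar{l}}$, which you establish exactly as the paper does (the $\sdd\psi_j$ contribution cancels by symmetry of third derivatives), so the constant $C$ depends only on $g_0$ and is uniform in $j$.
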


\begin{lem}There exists $C>0$ such that for $t \in(0, T]$,
$$\tr_{\omega_0}\omega' \leq e^{\frac{C}{t}}.$$
\end{lem}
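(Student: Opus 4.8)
The plan is to run a standard maximum-principle argument for the quantity $\tr_{\om_0}\om'$, but with a cutoff that handles the fact that the initial data $\psi_j$ is only bounded (not a priori bounded in $C^2$) at $t=0$, which is why the claimed bound degenerates like $e^{C/t}$ as $t\to 0^+$. Concretely, I would consider a test function of the shape
\[
H=t\log\tr_{\om_0}\om'-A\vp_j+\frac{1}{B-\vp_j},
\]
or a variant of it (for instance with $\log(t\,\tr_{\om_0}\om')$ in place of $t\log\tr_{\om_0}\om'$, or with an auxiliary term $\frac{C}{t}$ absorbed differently), where $A,B$ are large constants chosen using the uniform bound $\|\vp_j\|_{L^\infty}\le C$ from Lemma 5.1 so that $B-\vp_j\ge 1$. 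The role of the $-A\vp_j$ term is the familiar one: its Laplacian contributes $A\tr_{\om'}(\hat{\om}_t+\sdd\vp_j)-A\tr_{\om'}\hat\om_t = An - A\tr_{\om'}\hat\om_t$, and since $\hat\om_t\ge c\,\om_0$ on $[0,T']$ (shrinking $T'$ if needed), this produces a good negative term $-cA\tr_{\om'}\om_0$ that dominates the bad $C\tr_{\om'}\om_0$ coming from the torsion error in Proposition 5.3.

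The key steps, in order, would be: (1) compute $(\p-\Delta')H$ using Proposition 5.3 for the $\log\tr_{\om_0}\om'$ piece, the Chern-Ricci flow equation $\dot\vp_j=\log((\hat\om_t+\sdd\vp_j)^n/\Omega)$ together with Lemma 5.2 (which bounds $\dot\vp_j$ from below by $-C/t$ and from above by $C\log(1/t)$-type quantities) for the $-A\vp_j$ piece, and the elementary evolution of $\frac{1}{B-\vp_j}$; (2) at an interior maximum $(t_0,z_0)$ with $t_0>0$, use $\na H=0$ to substitute $\na\tr_{\om_0}\om'$ in terms of $\na\vp_j$, exactly as in the proof of Lemma 4.3 above, so that the gradient term from Proposition 5.3 is absorbed into $\frac{|\pa\vp_j|^2}{(B-\vp_j)^3}$ plus a term controlled by $\tr_{\om'}\om_0/(\tr_{\om_0}\om')^2$; (3) split into the two usual cases according to whether $(\tr_{\om_0}\om')^2$ is larger or smaller than a fixed multiple of $(B-\vp_j)^3$ — in the small case $H$ is directly bounded, in the large case the gradient term is absorbed and one is left with $(cA-C)\tr_{\om'}\om_0$ dominating; (4) use $\tr_{\om_0}\om'\le \tfrac{1}{(n-1)!}(\tr_{\om'}\om_0)^{n-1}\tfrac{\om'^n}{\om_0^n}$ together with the upper volume bound from Lemma 5.2 to convert the resulting bound on $\tr_{\om'}\om_0$ back into a bound on $\tr_{\om_0}\om'$ at $(t_0,z_0)$; (5) conclude $H\le C$ everywhere, and since $H\ge t\log\tr_{\om_0}\om' - C$, deduce $\log\tr_{\om_0}\om'\le C/t$, i.e. $\tr_{\om_0}\om'\le e^{C/t}$.

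The main obstacle I expect is bookkeeping the time factor so that the degenerate estimate comes out with the right power of $t$: the $-A\vp_j$ and $\frac{1}{B-\vp_j}$ terms are harmless near $t=0$ because $\vp_j$ is bounded, but the $\dot\vp_j$ term has only the one-sided control $\dot\vp_j\ge -C/t$ from Lemma 5.2, and the factor of $t$ multiplying $\log\tr_{\om_0}\om'$ must be arranged so that the contribution $\log\tr_{\om_0}\om'$ (from $\p(t\log\tr_{\om_0}\om')$) and the $-A\dot\vp_j$ term combine without producing an uncontrolled positive term as $t_0\to 0$. One also has to check the boundary/initial behavior: at $t=0$ the quantity $t\log\tr_{\om_0}\om'$ vanishes (or tends to $0$, since $\om'|_{t=0}=\om_{0,j}$ is a smooth metric with $\tr_{\om_0}\om_{0,j}$ bounded for each fixed $j$, though not uniformly in $j$ — the point is precisely that the $t$ factor kills this), so the maximum is either interior with $t_0>0$ or controlled at $t=0$. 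Everything else is a routine adaptation of the arguments already used for Proposition 3.1 and Lemma 4.3 in this paper and in Song-Tian \cite{ST}.
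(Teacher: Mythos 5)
Your proposal is correct and takes essentially the same route as the paper: the paper's proof also runs a maximum principle on $H=t\log\tr_{\om_0}\om'$ plus an auxiliary term built from the uniformly bounded potential (namely $e^{A(\sup\vp_j-\vp_j)}$), uses Proposition 5.1 with $\na H=0$ at the maximum to absorb the torsion-gradient term, and finishes exactly as in your steps (4)--(5) with the trace inequality, the volume bounds of Lemma 5.2, and the factor $t$ killing the $C/t_0$ at the maximum point. Your only deviation --- replacing the exponential by $-A\vp_j+\frac{1}{B-\vp_j}$ with the two-case dichotomy of Lemma 4.3 --- is an interchangeable variant of the same Phong--Sturm-type trick and closes the argument equally well.
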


\begin{proof} Let $$ H=t\log \tr _{\omega_0}\omega'+e^\Psi,$$
where $\Psi=A(\underset{[0,T']\times M}{\sup \varphi_j}-\varphi_j)$ and $A$ is a constant to be chosen later.
Assume that $H$ achieves its maximum at $(t_0, z_0)$ and $\tr_{\omega_0}\omega'>1$ (otherwise we obtain the upper
 bound for  $\tr_{\omega_0}\omega'$ directly). Choose coordinates around $(t_0, z_0)$ such that at this point,
 $(g_0)_{i\bar{j}}=\delta_{ij} $ and $(g'_{i\bar{j}}) $ is diagonal. 
First we have \begin{align*}(\frac{\partial}{\partial t}-\Delta')H=\ &t(\frac{\partial}{\partial t}-\Delta')\log \tr _{\omega_0}\omega'+\log \tr _{\omega_0}\omega' -Ae^\Psi\dot{\varphi}_j\\
\ &+Ae^\Psi\Delta'\varphi_j-A^2e^\Psi|\nabla \varphi_j|_{g'}^2. \end{align*}
It follows from Proposition 5.1 that $$(\frac{\partial }{\partial t}-\Delta')\log\tr_{\omega_0}\omega'\leq\frac{2}{(\tr_{\omega_0}\omega')^2}\re(g'^{k\bar{k}}(T_0)^i_{ki}\partial_{\bar{k}}\tr_{\omega_0}\omega')+C\tr_{\omega'}\omega_0.$$
At $(t_0, z_0),$ $\nabla_{\bar{k}}H=0$ gives $$t\frac{\partial_{\bar{k}}\tr_{\omega_0}\omega'}{\tr _{\omega_0}\omega'}-Ae^\Psi\partial_{\bar{k}}\varphi_j=0.$$ Then
 \begin{align}\label{5.3}
\begin{split}
\frac{2t}{(\tr_{\omega_0}\omega')^2}\re(g'^{k\bar{k}}(T_0)^i_{ki}\partial_{\bar{k}}\tr_{\omega_0}\omega')
&\leq \frac{2Ae^\Psi}{\tr_{\omega_0}\omega'}|\re(g'^{k\bar{k}}(T_0)^i_{ki}\partial_{\bar{k}}\varphi_j)|\\
&\leq e^\Psi(A^2|\nabla \varphi_j|_{g'}^2+C_1\tr_{\omega'}\omega_0).
\end{split}
\end{align}
Also $$\tr_{\omega_0}\omega' \leq \frac{1}{(n-1)!}(\tr_{\omega'}\omega_0)^{n-1}\frac{(\omega')^n}{\omega_0^n}.$$ Combining all the above inequalities, we get 
\begin{align}\label{5.4}
\begin{split}
&(\frac{\partial}{\partial t}-\Delta')H\\
\leq &\ C_1e^\Psi\tr_{\omega'}\omega_0+Ct\tr_{\omega'}\omega_0+\log\tr_{\omega_0}\omega'+Ane^\Psi-Ae^\Psi\tr_{\omega'}\hat{\omega_t}-Ae^\Psi\dot{\varphi}_j\\
\leq  &-e^\Psi\tr_{\omega'}(A\hat{\omega_t}-C_1\omega_0-Ct\omega_0)+(1-Ae^\Psi)\log\frac{(\omega')^n}{\omega_0^n}+(n-1)\log\tr_{\omega'}\omega_0+C_2\\
\leq &-C\tr_{\omega'}\omega_0-C_3\log t+C_4.
\end{split}
\end{align}
If we choose A to be large enough. Then \vspace{-3mm} at $(t_0, z_0),$ $$ C_5 \left (\frac{\omega_0^n}{(\omega')^n}\right )^{\frac{1}{n-1}}(\tr_{\omega_0}\omega')^{\frac{1}{n-1}}
 \leq C\tr_{\omega'}\omega_0\leq -C_3\log t+C_4.$$
So $$\log\tr_{\omega_0}\omega'\leq\log\left((\log\frac{1}{t})^{n-1}\left(\frac{(\omega')^n}{\omega_0^n}\right)\right)+C_6\leq \frac{C_7}{t}+C_8.$$
Thus H is uniformly bounded for $t \in (0, T]$ and we obtain the required estimate.
\end{proof}

Now let $S=|\nabla_{g_0}g'|_{g'}^2$. For convenience we still denote
$\Phi_{ij}^{\ \ k}=\Gamma_{ij}^{'k}-{(\Gamma_0)}_{ij}^k$, then $$S= |\Phi|^2_{g'}=g'^{i\bar{p}}g'^{j\bar{q}}g'_{k\bar{r}}\Phi_{ij}^{\ \ k}\Phi_{\bar{p}\bar{q}}^{\ \ \bar{r}}.$$

\begin{lem}
There exists $C>0 $ and $\lambda >0$ such that for $t \in(0, T']$,
$$S \leq C e^{\frac{\lambda}{t}}.$$

\end{lem}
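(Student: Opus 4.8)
The plan is to establish the estimate $S \le Ce^{\lambda/t}$ by the maximum principle applied to a carefully chosen test function that combines $S$ with $\tr_{\om_0}\om'$, following the structure of Proposition 3.1 but now keeping track of the time-dependence near $t=0$ rather than the spatial degeneration near $E$. The role played by powers of $|s|_h^2$ in Proposition 3.1 will here be played by powers of $t$; since we already know from Lemma 5.4 that $\tr_{\om_0}\om' \le e^{C/t}$, the strategy is to run a parabolic argument on a quantity of the form
\[
H = \frac{tS}{(e^{\Lambda/t}-\tr_{\om_0}\om')^2} + e^{\Psi} \, \tr_{\om_0}\om' - At
\]
or a variant thereof, where $\Psi = A(\sup \vp_j - \vp_j)$ as in Lemma 5.4 and $\Lambda$ is chosen large enough (using Lemma 5.4) that $e^{\Lambda/t} - \tr_{\om_0}\om' > \tfrac12 e^{\Lambda/t}$ for all $t\in(0,T']$. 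The point of the denominator is to convert the good negative term $-\tfrac{1}{C}S^2$ coming from the evolution of $\tr_{\om_0}\om'$ into something that dominates the bad terms from the evolution of $S$.

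The key steps, in order, are: first, recall the evolution inequality for $S$ from \cite{ShW} (the analogue of (\ref{e6})): on a Hermitian background one has $(\p - \Delta')S \le C(1 + S^{3/2}) - \tfrac12|\overline{\nabla}\Phi|^2$, where now the constant $C$ does not degenerate but we must track that the cross terms involving torsion are controlled by $\tr_{\om'}\om_0$ and $S$. Second, recall from Proposition 5.1 (Tosatti-Weinkove) and the standard computation the evolution of $\tr_{\om_0}\om'$, which gives a term $-\tfrac{1}{C}\tr_{\om'}\om_0 \cdot S$ after using the Cauchy-Schwarz inequality (\ref{e2})-type bound $|\nabla \tr_{\om_0}\om'|^2 / \tr_{\om_0}\om' \le \sum g'^{i\bar i}g'^{j\bar j}\partial_k g'_{i\bar j}\partial_{\bar k} g'_{j\bar i} + C$, which on a Hermitian manifold holds with a uniform additive constant exactly as in (\ref{e2}). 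Third, compute $(\p - \Delta')H$, expand the derivatives of the denominator (which produces gradient terms $\re(\nabla S \cdot \overline{\nabla}\tr_{\om_0}\om')$ and $|\nabla \tr_{\om_0}\om'|^2$), absorb the cross term $\re(\nabla S \cdot \overline{\nabla}\tr_{\om_0}\om')$ into $\tfrac12|\overline{\nabla}\Phi|^2$ plus lower-order terms using $|\overline{\nabla}S| \le 2S^{1/2}|\overline{\nabla}\Phi|$, and absorb the remaining positive powers of $S$ into $-\tfrac{1}{C}S^2$ by choosing the constant $A$ and then the auxiliary constant $\Lambda$ large. Fourth, handle the $e^{\Psi}\tr_{\om_0}\om'$ piece exactly as in the proof of Lemma 5.4: its evolution contributes $-Ae^{\Psi}\tr_{\om'}\hat{\om}_t$ plus terms that are absorbed, and crucially it forces the maximum point to have $\tr_{\om_0}\om'$ not too small, while the factor $t$ in front of $S$ and the $-At$ term produce the $e^{\lambda/t}$ type bound after exponentiating, just as $t\log\tr_{\om_0}\om'$ produced $e^{C/t}$ in Lemma 5.4. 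Evaluating at the maximum $(t_0,z_0)$ where $(\p-\Delta')H \ge 0$ and solving the resulting algebraic inequality for $S$ yields $S \le Ce^{\lambda/t}$.

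The main obstacle I expect is the bookkeeping in Step 3: balancing the three competing scales — the power of $t$ (governed by Lemma 5.4), the exponent $\Lambda$ in the denominator, and the constant $A$ in $\Psi$ — so that every bad term is genuinely absorbed. In particular one must be careful that the term $-\tfrac{1}{C}S^2$ available from $\tr_{\om_0}\om'$ is not itself weakened by the $e^{\Psi}$ prefactor beyond what can be tolerated, and that the gradient term $-\tfrac{6S|\nabla(\ldots)|^2}{(\ldots)^4}$ (which is negative, hence harmless) together with $+\tfrac{4\re\nabla S\cdot\overline\nabla(\ldots)}{(\ldots)^3}$ combine correctly after Cauchy-Schwarz; this is the same delicate cancellation as in Proposition 3.1 but with $t$-weights, and it is exactly the computation carried out in \cite{ST}. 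A secondary technical point is that, unlike in Proposition 3.1 where $|s|_h^2 \le 1$ gives a sign, here one uses $t \le T'$ and $e^{\Lambda/t} \ge 1$ to fix signs, and one must verify $H$ is bounded at $t=0$ in the limiting sense (it tends to a bounded quantity since $tS \to 0$ formally while $e^\Psi \tr_{\om_0}\om'$ is handled by Lemma 5.4), so the maximum is attained at an interior time $t_0 > 0$ where the parabolic maximum principle applies.
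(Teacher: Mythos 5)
Your outline is in the same family as the paper's proof: the paper also runs the maximum principle on a quotient quantity $S/(e^{\lambda_1/t}-\tr_{\om_0}\om')^2$ plus an auxiliary trace term, using $(\p-\Delta')S\le e^{\beta/t}(S^{3/2}+1)-\tfrac12(|\overline{\nabla}'\Phi|^2+|\nabla'\Phi|^2)$ and $(\p-\Delta')\tr_{\om_0}\om'\le -C_1e^{-\alpha/t}S+C_2e^{\alpha/t}$, with exactly the Cauchy--Schwarz absorptions you describe. But your specific auxiliary term $e^{\Psi}\tr_{\om_0}\om'$ (with $\Psi=A(\sup\vp_j-\vp_j)$ bounded, borrowed from the proof of the trace bound, which is Lemma 5.3 here) creates a genuine gap, and your proposed justification for it misreads what that lemma gives. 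First, at $t=0$ your $H$ equals $e^{\Psi(0)}\tr_{\om_0}\om_{0,j}=e^{\Psi(0)}(n+\Delta_{\om_0}\psi_j)$, which is not bounded uniformly in $j$: the $\psi_j$ converge only in $L^{\infty}$, and Lemma 5.3's estimate $\tr_{\om_0}\om'\le e^{C/t}$ degenerates as $t\to0^+$, so it does not control this term; the maximum of $H$ may then occur at $t=0$ with a $j$-dependent value and nothing uniform follows. Second, even at an interior maximum the scheme does not close: the good term surviving the absorption is the linear one supplied by the auxiliary term, namely its weight times $-C_1e^{-\alpha/t}S$, while the same term produces the zero-order error (weight times $C_2e^{\alpha/t}$) and, to finish, one must bound $H$ at the maximum point, hence the auxiliary term itself, uniformly. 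A bounded weight $e^{\Psi}$ can do neither: $e^{\Psi}e^{\alpha/t}$ is unbounded as $t\to 0$, and $e^{\Psi}\tr_{\om_0}\om'$ is only $O(e^{C/t_0})$ at the (possibly small, $j$-dependent) maximum time $t_0$, so $\max H$ is not uniformly bounded and the bound on $S$ cannot be propagated to other points.

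The paper's fix is precisely to take $H=S/(e^{\lambda_1/t}-\tr_{\om_0}\om')^2+e^{-\lambda_2/t}\tr_{\om_0}\om'$ with a tunable, exponentially decaying weight: choosing $\lambda_2$ large makes $e^{\alpha/t}e^{-\lambda_2/t}\le 1$ and, via Lemma 5.3, keeps $e^{-\lambda_2/t}\tr_{\om_0}\om'$ and $\tfrac{\lambda_2}{t^2}e^{-\lambda_2/t}\tr_{\om_0}\om'$ uniformly bounded, forces $H\to0$ as $t\to0^+$ for each fixed $j$, preserves the surviving good term $-\tfrac12C_1e^{-(\alpha+\lambda_2)/t}S$, and then $\lambda_1\ge\alpha+\lambda_2$ turns the resulting bound $S(t_0,z_0)\le Ce^{(\alpha+\lambda_2)/t_0}$ into a uniform bound on $\max H$, hence $S\le Ce^{2\lambda_1/t}$ everywhere. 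So the missing idea is that all auxiliary weights here must be exponential in $1/t$ and adjustable (the analogues of the tunable powers $|s|_h^{2\alpha}$, $|s|_h^{4\beta}$ in Propositions 3.1--3.2), not the bounded factor $e^{\Psi}$, and not polynomial factors of $t$ as in your opening sentence, since the degeneracies to be absorbed ($e^{\beta/t}S^{3/2}$, $e^{\alpha/t}$, etc.) are exponential. A minor further correction: the evolution of $\tr_{\om_0}\om'$ yields a good term linear in $S$ (with weight $e^{-\alpha/t}$ coming from $\om'\ge ce^{-\alpha/t}\om_0$); the quadratic good term $-cS^2$ only appears after multiplying by $S/(e^{\lambda_1/t}-\tr_{\om_0}\om')^3$ from the quotient structure.
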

\begin{proof}

By the evolution equation for $\tr_{\om_0}\om'$ in ~\cite{TW4} and Lemma 5.3,  we have
\begin{align*}
(\frac{\partial }{\partial t}-\Delta')\tr_{\omega_0}\omega'&\leq -C_1e^{-\frac{\alpha}{t}}S+C_2e^{\frac{\alpha}{t}}
\end{align*}
and $$(\frac{\partial }{\partial t}-\Delta')S\leq e^{\frac{\beta}{t}}(S^{3/2}+1)-\frac{1}{2}(|\overline{\nabla}'\Phi|_{g'}^2+|\nabla'\Phi|_{g'}^2).
$$ for some positive constants $\alpha$ and $\beta$. 
Take $\lambda_1>0$ such that $\frac{1}{2}e^{\frac{\lambda_1}{t}}<e^{\frac{\lambda_1}{t}}-\tr_{\omega_0}\omega'<e^{\frac{\lambda_1}{t}}$. 
Let $$H= \frac{S}{(e^{\frac{\lambda_1}{t}}-\tr_{\omega_0}\omega')^2}+e^{-\frac{\lambda_2}{t}}\tr_{\omega_0}\omega'.$$ Compute
\begingroup
\addtolength{\jot}{1em}
\begin{align*}
(\frac{\partial}{\partial t}-\Delta' ) H=&\frac{1}{(e^{\frac{\lambda_1}{t}}-\tr_{\omega_0}\omega')^2}(\frac{\partial}{\partial t}-\Delta' )S+\frac{2S}{(e^{\frac{\lambda_1}{t}}-\tr_{\omega_0}\omega')^3}(\frac{\partial}{\partial t}-\Delta' )\tr_{\omega_0}\omega'\\
&-\frac{4\re\nabla'\tr_{\omega_0}\omega' \cdot \overline{\nabla}'S}{(e^{\frac{\lambda_1}{t}}-\tr_{\omega_0}\omega')^3}-\frac{6S|\nabla' \tr_{\omega_0}\omega'|^2}{(e^{\frac{\lambda_1}{t}}-\tr_{\omega_0}\omega')^4}+\frac{\frac{2\lambda_1}{t^2}e^{\frac{\lambda_1}{t}}S}{(e^{\frac{\lambda_1}{t}}-\tr_{\omega_0}\omega')^3}\\
&+e^{-\frac{\lambda_2}{t}}(\frac{\partial}{\partial t}-\Delta' )\tr_{\omega_0}\omega'+\frac{\lambda_2}{t^2}e^{-\frac{\lambda_2}{t}}\tr_{\omega_0}\omega'\\
\leq& \ 4e^{-\frac{2\lambda_1}{t}}e^{\frac{\beta}{t}}(S^{3/2}+1)-\frac{1}{2}e^{-\frac{2\lambda_1}{t}}(|\overline{\nabla}'\Phi|_{g'}^2+|\nabla'\Phi|_{g'}^2)\\
&+\left (-2C_1e^{-\frac{\alpha}{t}}e^{-\frac{3\lambda_1}{t}}S^2+16C_2e^{\frac{\alpha}{t}}e^{-\frac{3\lambda_1}{t}}S\right)
+32e^{-\frac{3\lambda_1}{t}}|\re\nabla'\tr_{\omega_0}\omega' \cdot \overline{\nabla}'S|\\
&+C_3e^{-\frac{\lambda_1}{t}}S+\left(-C_1e^{-\frac{\alpha}{t}}e^{-\frac{\lambda_2}{t}}S+
C_2e^{\frac{\alpha}{t}}e^{-\frac{\lambda_2}{t}}\right)+C_4
\end{align*}
\endgroup
where we choose $\lambda_2$ large enough such that $2\frac{\lambda_2}{t^2}e^{-\frac{\lambda_2}{t}}\tr_{\omega_0}\omega'\leq C_4$ for some constant $C_4$.
Note that $|\nabla'\tr_{\omega_0}\omega'|_{g'}\leq \frac{1}{64}e^{\frac{\gamma}{t}}S^{1/2}$ for some $\gamma>0$ and $|\overline{\nabla}'S|_{g'}\leq 2S^{1/2}|\overline{\nabla}'\Phi|_{g'}$, we have
\begin{align*}
32e^{-\frac{3\lambda_1}{t}}|\re\nabla'\tr_{\omega_0}\omega' \cdot \overline{\nabla}'S| &\leq e^{-\frac{3\lambda_1}{t}}e^{\frac{\gamma}{t}}S|\overline{\nabla}'\Phi|_{g'}\\
&\leq \frac{1}{2}e^{-\frac{2\lambda_1}{t}}|\overline{\nabla}'\Phi|_{g'}^2+\frac{1}{2}e^{-\frac{4\lambda_1}{t}}e^{\frac{2\gamma}{t}}S^2.
\end{align*}
Also we have \vspace{.6mm}
\begingroup
\addtolength{\jot}{.6em}
\begin{align*}
4e^{-\frac{2\lambda_1}{t}}e^{\frac{\beta}{t}}S^{3/2}&\leq C_1e^{-\frac{\alpha}{t}}e^{-\frac{3\lambda_1}{t}}S^2+\frac{4}{C_1}e^{\frac{\alpha}{t}}e^{\frac{2\beta}{t}}e^{-\frac{\lambda_1}{t}}S.\end{align*}
\endgroup
Take $\lambda_2$ sufficiently large such that $e^{\frac{\alpha}{t}}e^{-\frac{\lambda_2}{t}}<1$, then fix $\lambda_2$. 
Let $\lambda_1\geq \alpha+\lambda_2$ be large enough such that $$(\frac{\partial}{\partial t}-\Delta' ) H\leq -\frac{1}{2}C_1e^{-\frac{\alpha}{t}}e^{-\frac{\lambda_2}{t}}S+C$$ for some constant $C$. Assume that $H$ achieves its maximal at $(t_0, z_0), t_0>0$, then at this point $$0\leq -\frac{1}{2}C_1e^{-\frac{\alpha}{t}}e^{-\frac{\lambda_2}{t}}S+C.$$ It follows that $H$ is bounded by some constant. 
Therefore $S\leq Ce^{\frac{\lambda}{t}}$ for some constants $C>0$ and $\lambda>0$.
\end{proof}
In addition, to bound the derivatives of $\om_j$ in the $t$-direction, it is sufficient to bound $|\Ric(g)|$ which follows from the proof of Lemma 3.4 in \cite{N}.  Then by the standard parabolic estimates~\cite{L}, 
we obtain all the higher order estimates.

\begin{prop} For any $0<\epsilon<T'$ and $k\geq 0$, there exists $ C_{\epsilon, T', k}>0$, such that 
$$  ||\varphi_j||_{C^k([\epsilon, T']\times M)}\leq C_{\epsilon, T', k}.$$
\end{prop}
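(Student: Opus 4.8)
The plan is to convert the a priori estimates of Lemmas~5.1--5.4, together with the bound on $|\Ric(g_j)|$, into uniform (in $j$) smooth estimates by a standard parabolic bootstrap. The whole argument will be carried out on each parabolic cylinder $[\ep,T']\times M$ with $\ep>0$ fixed, so that the factors $e^{C/t}$ appearing in those lemmas are harmless; the reason one cannot avoid this localization in time is precisely that the initial datum $\vp_0$ is merely bounded, so no better-than-$C^0$ control is available at $t=0$.

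First I would fix $0<\ep<T'$ and produce a uniform two-sided metric bound on $[\ep,T']\times M$. Lemma~5.3 gives $\tr_{\om_0}\om_j\le e^{C/\ep}$, so the eigenvalues of $g_j$ relative to $g_0$ are bounded above uniformly in $j$; Lemma~5.2 gives $(\om_j)^n/\Om\ge\ep^n/C$, and since $\Om/\om_0^n$ is a fixed smooth positive function this bounds the product of those eigenvalues from below. Hence each eigenvalue is pinched, and there is $C_\ep>0$, independent of $j$, with $C_\ep^{-1}\om_0\le\om_j\le C_\ep\om_0$ on $[\ep,T']\times M$. In particular the parabolic complex Monge--Amp\`ere equation solved by $\vp_j$ is uniformly parabolic there, with $\dot\vp_j=\log((\om_j)^n/\Om)$ uniformly bounded (Lemma~5.2) and $\vp_j$ uniformly bounded in $C^0$ (Lemma~5.1).

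Next I would bring in the third-order and Ricci estimates. Lemma~5.4 bounds $S=|\na_{g_0}g_j|_{g_j}^2$, and combined with the previous step this is a uniform bound on $\na^0 g_j$, i.e.\ on the first spatial derivatives of the metric; the bound $|\Ric(g_j)|\le Ce^{C/\ep}$, proved exactly as Lemma~3.4 of~\cite{N}, controls $\p g_j$ through $\p\om_j=-\Ric(\om_j)$. Thus $g_j$, hence the coefficients $g_j^{i\bar k}$ of $\De_{\om_j}=g_j^{i\bar k}\pa_i\pa_{\bar k}$, are uniformly bounded and parabolically H\"older continuous on $[\ep,T']\times M$. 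Now differentiating the Monge--Amp\`ere equation in a space direction, each $\pa_\ell\vp_j$ (and each $\pa_{\bar\ell}\vp_j$) satisfies a linear, uniformly parabolic equation whose coefficients are those $g_j^{i\bar k}$ and whose right-hand side, built from $g_j^{i\bar k}$ and derivatives of the fixed background data, is H\"older bounded. Interior parabolic Schauder estimates~\cite{L}, applied on a slightly smaller time interval, give uniform $C^{2+\al}$ bounds for $\pa_\ell\vp_j$, i.e.\ uniform $C^{3+\al}$ bounds for $\vp_j$; this improves the regularity of the coefficients, and iterating (shrinking the time interval each time but keeping it above a fixed level, then starting from $[\ep/2,T']$) yields uniform $C^k$ bounds for $\vp_j$ on $[\ep,T']\times M$ for every $k$. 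Alternatively, once the uniform metric equivalence is in hand one may invoke directly the local higher-order estimates of Gill~\cite{Gill} used in the proof of Theorem~1.2.

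The main point is that no genuinely new estimate is required here --- the hard work is Lemmas~5.1--5.4 and the Ricci bound --- so I expect the only real obstacle to be bookkeeping: every estimate degenerates like $e^{C/t}$ as $t\to0$, which forces one to run the entire bootstrap on $[\ep,T']$, and one must check that the third-order input of Lemma~5.4 is exactly what is needed to make the linearized coefficients H\"older so that linear Schauder theory, rather than a more delicate Evans--Krylov-type argument, closes the loop.
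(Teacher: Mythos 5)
Your proposal is correct and follows essentially the same route as the paper, which (very tersely) combines the uniform bounds of Lemmas 5.1--5.4 with the Ricci bound from Lemma 3.4 of \cite{N} for the $t$-derivatives and then invokes standard parabolic estimates \cite{L} to bootstrap to all orders on $[\epsilon,T']\times M$. Your expanded bookkeeping --- metric equivalence from Lemmas 5.2 and 5.3, H\"older coefficients from the Calabi-type bound of Lemma 5.4 plus the Ricci bound, then Schauder iteration --- is exactly the intended argument.
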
 
The proposition below follows from the same arguments for the proof of Proposition 3.3 in \cite{ST}. For reader's convenience, we provide the proof here.\begin{prop}
There exists a function $\varphi \in C^0([0, T)\times M)\cap C^{\infty}((0, T)\times M)$ such that  $\varphi$ is the unique solution of the equation
\begin{align} \label{5.5}
\frac{\partial{\varphi}}{\partial{t}}=\log\frac{(\hat{\omega}_t+\sqrt{-1}\partial\bar{\partial}\varphi)^n}{\Omega}, \ \ for\  t \in (0, T), \ \  \varphi|_{t=0}=\varphi_0.
\end{align}
\end{prop}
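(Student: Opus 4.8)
The plan is to construct $\varphi$ as the limit of the sequence $\varphi_j$ and then verify that it solves (\ref{5.5}) and has the claimed regularity and uniqueness. First I would invoke Lemma 5.1: the $\varphi_j$ are uniformly bounded in $C^0([0,T']\times M)$ and form a Cauchy sequence there, so there is a limit $\varphi \in C^0([0,T']\times M)$ with $\varphi|_{t=0} = \lim_j \psi_j = \varphi_0$ by (\ref{5.2}). Since $T' < T$ was arbitrary, this defines $\varphi \in C^0([0,T)\times M)$. Next, on any slab $[\epsilon, T']\times M$ with $0 < \epsilon < T'$, Proposition 5.2 gives uniform $C^k$ bounds on the $\varphi_j$ for every $k$; hence by Arzel\`a--Ascoli a subsequence converges in $C^\infty_{\loc}((0,T)\times M)$, and the limit must agree with $\varphi$ by uniqueness of limits. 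Therefore $\varphi \in C^\infty((0,T)\times M)$, and passing to the limit in the parabolic complex Monge--Amp\`ere equation (5.6) for $\varphi_j$ — which is legitimate on $(0,T)\times M$ because all derivatives converge locally uniformly, and Lemma 5.2 keeps $(\hat\om_t + \sdd\varphi_j)^n/\Omega$ bounded away from $0$ on compact time subintervals — shows that $\varphi$ satisfies $\frac{\pa\varphi}{\pa t} = \log\frac{(\hat\om_t + \sdd\varphi)^n}{\Omega}$ for $t \in (0,T)$.

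For uniqueness, suppose $\varphi, \tilde\varphi$ are two solutions in $C^0([0,T)\times M)\cap C^\infty((0,T)\times M)$ with the same initial value $\varphi_0$. I would compare them by the maximum principle applied to $\varphi - \tilde\varphi$: on $(0,T)\times M$ the difference satisfies a parabolic equation of the form $\frac{\pa}{\pa t}(\varphi - \tilde\varphi) = \log\frac{(\hat\om_t + \sdd\varphi)^n}{(\hat\om_t + \sdd\tilde\varphi)^n}$, whose right-hand side is, by concavity of $\log\det$ along the segment joining the two Hermitian forms, bounded above by $\tr_{\om}\sdd(\varphi - \tilde\varphi)$ for an appropriate positive Hermitian metric $\om$ (and symmetrically from below). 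The subtlety is that the two solutions are only continuous up to $t = 0$ and a priori the metrics $\hat\om_t + \sdd\varphi$ degenerate as $t \to 0^+$; this is handled exactly as in Song--Tian, by running the maximum principle on $[\delta, T']\times M$ and letting $\delta \to 0$ using the uniform $C^0$ convergence $\varphi(t), \tilde\varphi(t) \to \varphi_0$ as $t \to 0^+$ to control the boundary term at time $\delta$. This forces $\varphi = \tilde\varphi$.

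The main obstacle is the behavior near $t = 0$: one has genuine smoothness only on the open slab, the reference metrics $\hat\om_t$ degenerate (they are only nonnegative, or more precisely lose positivity) as $t \to 0$, and the initial datum $\varphi_0$ is merely bounded $\om_0$-plurisubharmonic rather than smooth, so none of the estimates are uniform up to $t = 0$. The key point that makes everything work is that the $C^0$ estimate and the Cauchy property in Lemma 5.1 \emph{are} uniform up to $t=0$, which pins down both the initial value and the uniqueness across the boundary; the interior estimates of Proposition 5.2 then upgrade the convergence to $C^\infty$ away from $t=0$. Since this is precisely the structure of Proposition 3.3 in \cite{ST}, the argument there transfers verbatim once Lemmas 5.1--5.3, Lemma 5.4 and Proposition 5.2 — all of which we have established in the Hermitian setting — are in place.
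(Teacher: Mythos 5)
Your proposal is correct and follows essentially the same route as the paper: take the $C^0$ limit of the $\varphi_j$ via Lemma 5.1, upgrade to smooth convergence on $[\epsilon,T']\times M$ via Proposition 5.2 so that the limit lies in $C^{\infty}((0,T)\times M)$, solves the equation, and attains $\varphi|_{t=0}=\varphi_0$, and prove uniqueness by a maximum principle applied to the difference of two solutions and pushed to $t=0$ by continuity (the paper phrases this last step via absolute continuity and a.e.\ monotonicity of $\psi_{\max}$ and $\psi_{\min}$ on $[0,T)$ rather than your $\delta\to 0^+$ limit, but it is the same argument). One small aside: the reference forms $\hat\omega_t$ do not degenerate as $t\to 0^+$ here --- they are honest Hermitian metrics on all of $[0,T']$; the only roughness is in the initial datum $\varphi_0$, which is why the interior estimates of Lemmas 5.2--5.4 blow up as $t\to 0^+$, but this does not affect your argument.
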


 \begin{proof} By Lemma 5.1, $\varphi_j$ is a Cauchy sequence in $C^0([0, T']\times M)$ and so we can 
define $ \varphi= \lim_{j\rightarrow \infty}\varphi_j$ which is in $C^0([0, T']\times M)$. Then it follows from 
Proposition 5.2 that for any $0<\epsilon<T'<T$, $\varphi_j$ converges to $\varphi$ in $C^{\infty}([\epsilon, T']\times M)$. 
Therefore $\varphi \in C^{\infty}((0, T)\times M)$ satisfying the above equation on $(0, T)$. Note 
that 
$$\lim_{t \rightarrow 0^+} ||\varphi(t, \cdot)-\varphi_0(\cdot)||_{L^{\infty}(M)}=0$$
 as $\vp_j(0)=\psi_j$ 
and $\psi_j\ra\vp_0$ in $L^{\infty}(M)$ as $j\ra\infty$. Then $\vp|_{t=0}=\vp_0$ and we have the existence of 
a solution $\varphi \in C^0([0, T)\times M)\cap C^{\infty}((0, T)\times M)$ for equation (\ref{5.5}).  
To prove the uniqueness, we assume that there is another solution 
$\tilde{\varphi}\in C^0([0, T)\times M)\cap C^{\infty}((0, T)\times M)$  
of equation (\ref{5.5}). Let 
$\psi=\tilde{\varphi} - \varphi$. Then $\psi$ solves the equation
$$\frac{\partial{\psi}}{\partial{t}}=\log\frac{(\hat{\omega}_t+\sqrt{-1}\partial\bar{\partial}\varphi+\sqrt{-1}\partial\bar{\partial}\psi)^n}{(\hat{\omega}_t+\sqrt{-1}\partial\bar{\partial}\varphi)^n}, \ \ for\  t \in (0, T), \ \  \psi|_{t=0}=0.$$
At any given time t, the maximum of $\psi$ is achieved at some point $z\in M$, 
then $\frac{d\psi_{max}(t)}{dt}\leq 0$ a.e. in $[0, T)$.  Similarly we have $\frac{d\psi_{min}(t)}{dt }\geq 0$ a.e. 
in $[0, T)$. As both $\psi_{max}(t)$ and $\psi_{min}(t)$ are absolutely continuous on $([0, T)$ with $\psi_{max}(0)=\psi_{min}(0)=0$, 
we have $\psi_{max}(t)\leq 0 \leq \psi_{min}(t)$  for $t\in[0, T)$. Hence $\psi(t)=0$ for $t\in[0, T)$.
\end{proof}
Now we can prove the smoothing property for the Chern-Ricci flow with rough initial data.
\begin{proof}[Proof of Theorem 1.3] If $\varphi$ is a solution of (\ref{5.5}),  then taking $\sqrt{-1}\partial\bar{\partial}$ of (\ref{5.5}) shows
that $\omega=\hat{\omega}_t+\sqrt{-1}\partial\bar{\partial}\varphi$ solves the Chern-Ricci flow on $(0, T)$ and $\lim_{t\rightarrow 0^+}\|\varphi(t,\cdot)-\varphi_0(\cdot)\|_{L^{\infty}(M)}=0$. Conversely, if  $\omega$ solves (1.1), then $$\frac{\partial}{\partial t}(\omega-\hat{\omega}_t)=\sqrt{-1}\partial\bar{\partial}\log\frac{\omega^n}{\Omega}.$$ 
Thus $\omega(t)$ must be of the form $\omega(t)=\hat{\omega}_t+\sqrt{-1}\partial\bar{\partial}\varphi$ for some $\varphi$ solving the equation 
\begin{align}\label{5.6}
\sqrt{-1}\partial\bar{\partial}(\frac{\partial\varphi}{\partial t}-\log\frac{(\hat{\omega}_t+\sqrt{-1}\partial\bar{\partial}\varphi)^n}{\Omega})=0.
\end{align}
Proposition 5.3 gives a solution of the above equation. Suppose that there exists another solution $\tilde{\varphi}\in C^{\infty}((0, T)\times M)\cap C^0([0, T)\times M)$
 of equation (\ref{5.6}). Then $$\frac{\partial\tilde{\vp}}{\partial t}=\log\frac{(\hat{\omega}_t+\sqrt{-1}\partial\bar{\partial}\tilde{\vp})^n}{\Omega}+f(t)$$ 
with $\lim_{t\rightarrow0^+}\tilde{\vp}(t)=\varphi_0$ for some smooth function $f(t)$. So we get $\varphi=\tilde{\varphi}-\int_0^tf(s)ds$ is 
a solution of the equation (\ref{5.5}) which is unique. Therefore $\tilde{\varphi}=\varphi+\int_0^tf(s)ds$ and we prove the uniqueness.
\end{proof}

\end{document}